\theoremstyle{plain}
\newtheorem{theorem}{Theorem}[section]
\newtheorem{corollary}[theorem]{Corollary}
\newtheorem{lemma}[theorem]{Lemma}
\newtheorem{conjecture}[theorem]{Conjecture}
\newcommand\mb[1]{\ensuremath{\mathbf{#1}}}
\theoremstyle{definition}
\title{Coloring lines and Delaunay graphs with respect to boxes}
\author{Istv\'an Tomon\thanks{Ume\r{a} University, \emph{e-mail}: \textbf{istvantomon@gmail.com}}}
\date{}
\begin{document}
	\sloppy 
	
	\maketitle

\begin{abstract}
    The goal of this paper is to show the existence (using probabilistic tools) of configurations of lines, boxes, and points with certain interesting combinatorial properties.

    (i) First, we construct a family of $n$ lines in $\mathbb{R}^3$ whose intersection graph is triangle-free of chromatic number $\Omega(n^{1/15})$. This improves the previously best known bound $\Omega(\log\log n)$ by Norin, and is also the first construction of a triangle-free intersection graph of simple geometric objects with polynomial chromatic number.


    (ii) Second, we construct a set of $n$ points in $\mathbb{R}^d$, whose Delaunay graph with respect to axis-parallel boxes has independence number at most $n\cdot (\log n)^{-(d-1)/2+o(1)}$. This extends the planar case considered by  Chen, Pach, Szegedy, and Tardos.
\end{abstract}

\section{Introduction}

The purpose of this paper is to present several interesting constructions of geometric configurations, with the help of simple probabilistic ideas.

\subsection{Coloring lines}

Given a graph $G$, how does its clique number $\omega(G)$ relate to its chromatic number $\chi(G)$? Clearly, $\chi(G)\geq \omega(G)$, but not much can be said in the other direction. Sophisticated probabilistic arguments show the existence of $n$-vertex triangle-free graphs of chromatic number $n^{1/2-o(1)}$, see e.g. \cite{K95}. The situation completely changes, however, if we assume that our graph is an intersection or disjointness graph of certain geometric shapes.

  The \emph{intersection graph} of a family $\mathcal{F}$ of sets is the graph, whose vertex set is $\mathcal{F}$, and two vertices are joined by an edge if they have a nonempty intersection. The \emph{disjointness graph} of a family is the complement of its intersection graph. In the past sixty years, the relationship between the clique number and chromatic number of intersection and disjointness graphs of certain geometric objects has been extensively studied. In almost all instances, the following tight connections have been observed. A family $\mathcal{G}$ of graphs is \emph{$\chi$-bounded}, if there exists a function $f$ such that $\chi(G)\leq f(\omega(G))$ for every member $G\in\mathcal{G}$. If $\mathcal{G}$ is a family of intersection or disjointness graphs of certain nice shapes (e.g. disks, boxes, segments, convex sets, curves), then either $\mathcal{G}$ is $\chi$-bounded \cite{AG60,CW21,D22,Gy85,K91,LMPT94,PTT,PT20,RW19}, or the chromatic number grows by at most a polylogarithmic function of the number of vertices assuming the clique number is fixed \cite{B65,FP14,KPW15,PaKK14,RW19,semilin21,boxes22,W15,W19}. In particular, a general result of Fox and Pach \cite{FP14} shows that if $G$ is the intersection graph of $n$ arc-wise connected sets in the plane (also known as a \emph{string graph}), then $\chi(G)\leq (\log n)^{O(\log \omega(G))}$. Another general result of Tomon \cite{semilin21} states that if the shapes can be defined by a \emph{semilinear relation}, then both the intersection and disjointness graph $G$ satisfies $\chi(G)\leq (\omega(G)\cdot \log n)^{O(1)}$, where the constant hidden in the $O(.)$ notation depends only on the complexity of the relation (we refer the interested reader to \cite{BCSTT20} or \cite{semilin21} for formal definitions).
  
  So far, the only family which does not fit into this pattern that we are aware of is the family of disjointness graphs of arcwise-connected sets in the plane. For every $n$, Suk and Tomon \cite{SukTom21} constructed such graphs on $n$ vertices that are triangle-free and have chromatic number $\Omega(n^{1/4})$. One goal of the current manuscript is to show another, perhaps even more natural family of geometric graphs with similar behavior.


  Pach, Tardos, and T\'oth \cite{PTT} asked whether the family of intersection graphs of \emph{lines} in $\mathbb{R}^3$ is $\chi$-bounded. This was answered by Norin \cite{Norin} in the \emph{negative} by showing that so called \emph{double shift graphs} can be realized as intersection graphs of lines. This infinite family of graphs was introduced by Erd\H{o}s and Hajnal \cite{EH} in 1964, and they proved that every $n$-vertex member of this family is triangle-free of chromatic number $\Theta(\log\log n)$. The argument of Norin was published by Davies \cite{D21}, who also proved that there are intersection graphs of lines of arbitrarily large girth and chromatic number. However, in Davies's construction the chromatic number grows even more slowly as a function of the number of vertices. Here, we show that somewhat surprisingly, there are triangle-free intersection graphs of lines with \emph{polynomial} chromatic number.

\begin{theorem}\label{thm:main}
There exists $c>0$ such that for every positive integer $n$, there exists a triangle-free intersection graph of $n$ lines in $\mathbb{R}^3$ of chromatic number at least $cn^{1/15}$.
\end{theorem}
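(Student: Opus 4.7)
My plan is to use a probabilistic construction based on a tractable parametrization of lines. I would represent each non-vertical line in $\mathbb{R}^3$ by a $4$-tuple $(a,b,c,d)\in\mathbb{R}^4$, corresponding to the line $\{(x,ax+b,cx+d):x\in\mathbb{R}\}$. Under this identification, two lines with parameters $(a,b,c,d)$ and $(a',b',c',d')$ intersect precisely when $(a-a')(d-d')=(b-b')(c-c')$, so the intersection relation is captured by a single quadratic condition on the difference vector in $\mathbb{R}^4$, which makes the problem accessible to combinatorial methods and opens the way to a probabilistic choice of parameters.

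Next I would sample $n$ lines from a carefully chosen distribution on a combinatorial scheme---for instance, lines indexed by pairs or triples from an auxiliary ground set, or a random family partitioned into roughly $n^{1/k}$ blocks for a suitable integer $k$. The goal is to have many intersecting pairs while keeping the number of pairwise intersecting triples small. For triangle-freeness I would estimate the expected number of triples of pairwise intersecting lines and apply the deletion method; recall that three pairwise intersecting lines in $\mathbb{R}^3$ must be either concurrent or coplanar, and both configurations should be rare in a sufficiently generic model. For the chromatic number I would upper bound the independence number via a union bound over candidate large sets of lines, showing that any set of at least $n^{14/15+o(1)}$ lines contains an intersecting pair with overwhelming probability, and then concluding $\chi(G)\geq n/\alpha(G)\geq n^{1/15-o(1)}$ and sharpening via standard amplification.

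The main obstacle is to simultaneously arrange enough intersecting pairs to keep the independence number small, while keeping the number of pairwise intersecting triples small enough to preserve triangle-freeness after deletion. Intersections of lines in $\mathbb{R}^3$ are codimension-one events, so a purely random construction yields almost no edges, whereas a highly structured one typically produces many coplanar or concurrent triples. The specific exponent $1/15$ most likely arises from optimizing the parameters of the construction, such as the number and size of the combinatorial blocks, the density of intersecting pairs, and the probability of a coplanar triple. Once the right parametrization is identified, the analysis should proceed via standard tools: Chernoff-type concentration bounds on the number of triangles, combined with a union-bound or Lov\'asz Local Lemma argument controlling the independence number.
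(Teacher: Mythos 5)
Your outline correctly identifies the two competing constraints (few triangles, small independence number) and the deletion step, but the crucial mechanism that makes the paper's argument work is missing, and the step you propose in its place would fail. You say you would bound the independence number ``via a union bound over candidate large sets of lines, showing that any set of at least $n^{14/15+o(1)}$ lines contains an intersecting pair with overwhelming probability.'' There is no model in which this works as stated. If the lines themselves are random (e.g.\ random parameters $(a,b,c,d)$), then, as you yourself note, intersection is a codimension-one event and with probability $1$ no two sampled lines meet at all, so there is nothing to union-bound. If instead the randomness is only a subsample of a fixed structured ground family $\mathcal{L}$ (which is what is actually needed), then whether a given candidate set of sampled lines is independent is a deterministic property of $\mathcal{L}$, not a random event; the correct statement is that the sampled graph has a large independent set only if some independent set of the ground graph survives the sampling, and a naive union bound over all large subsets of $\mathcal{L}$ (there are roughly $\binom{|\mathcal{L}|}{\alpha}$ of them) is far too lossy against the survival probability $p^{\alpha}$. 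The paper's key idea, which your proposal does not contain, is a container-type bound on the number of \emph{maximal} independent sets of the ground graph: the edge set of the intersection graph of $\mathcal{L}$ decomposes into complete bipartite graphs indexed by the planes that contain two parallel families, each maximal independent set is determined by choosing one side of each such bipartite graph, and a lattice-point count shows the number of relevant planes is only $O(|S|^2k^3/r^2)$ (Lemmas \ref{lemma:size_of_H} and \ref{lemma:independent}). Only then does a Chernoff-plus-union bound over this small collection of maximal independent sets control $\alpha(H)$. (The Lov\'asz Local Lemma plays no role and would not substitute for this.)

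A second, smaller gap: you wave at the ground family as ``lines indexed by pairs or triples from an auxiliary ground set'' and hope that concurrent and coplanar triples are ``rare in a sufficiently generic model,'' but the paper needs a specific structure to make both the triangle count and the container count work. It takes a set $S$ of $\Theta(m^{3/2})$ indivisible direction vectors in a dyadic box, any three of which are linearly independent (built from a moment curve over $\mathbb{F}_p$, Lemma \ref{lemma:independent_vectors}), and takes all lines with direction in $S$ meeting $[k]^3$ in at least $r$ lattice points. The $3$-wise independence of directions eliminates coplanar triangles outright (three pairwise-crossing lines must then be concurrent), and the requirement of $r$ lattice points per line both caps $\alpha(G)$ by $k^3/r$ and forces any plane containing two parallel classes to contain $\Omega(r^2)$ lattice points, which is what bounds the number of planes and hence both the triangle count and the container count. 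Without these quantitative inputs there is no way to ``optimize parameters'' to reach a polynomial exponent such as $1/15$; your proposal as written identifies the right general shape of the argument but not the ideas that make it go through.
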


In particular, we construct a triangle-free intersection graph of $n$ lines with independence number $O(n^{14/15})$. Previously, it was not known whether independence number $o(n)$ can be achieved for such graphs. Moreover, Theorem \ref{thm:main} suggests that the aforementioned result of Fox and Pach about string graphs is unlikely to have any reasonable extensions to higher dimensions. Finally, we remark that the family of disjointness graphs of lines in $\mathbb{R}^3$ is well-behaved,  Pach, Tardos, and T\'oth \cite{PTT} proved that it is $\chi$-bounded.


\subsection{Zarankiewicz problem for boxes}

The Zarankiewicz problem \cite{Z51} asks for the maximum number of edges in a bipartite graph on $n+n$ vertices containing no copy of the complete bipartite graph $K_{s,t}$. A geometric variant of this problem was recently introduced by Basit, Chernikov, Starchenko, Tao, and Tran \cite{BCSTT20}. Given a set $P$ and family of sets $\mathcal{F}$, the \emph{incidence graph} of $(P,\mathcal{F})$ is the bipartite graph with vertex classes $P$ and $\mathcal{F}$, where $p\in P$ and $F\in \mathcal{F}$ are joined by an edge if $p\in F$. 

Basit et al. \cite{BCSTT20} proved that if $G$ is the incidence graph of $n$ points and $n$ boxes (here and later, boxes are always axis-parallel) in $\mathbb{R}^d$, and $G$ is $K_{t,t}$-free, then $G$ has average degree $O_{d,t}((\log n)^{2d})$. This was recently improved to $O_d((\frac{\log n}{\log\log n})^{d-1})$ by Chan and Har-Peled \cite{CH23}, who  also highlighted that a construction matching their upper bound already appeared in a 1990 paper of Chazelle \cite{Ch90}.

\begin{theorem}[Chazelle \cite{Ch90}]\label{thm:box}
Let $d$ be a positive integer. Then there exists $c>0$ such that the following holds for every sufficiently large $n$. There exists a set of $n$ points $P$ and family of $n$ boxes $\mathcal{B}$ in $\mathbb{R}^d$ such that the incidence graph of $(P,\mathcal{B})$ is $K_{2,2}$-free of average degree at least $c(\frac{\log n}{\log\log n})^{d-1}$.
\end{theorem}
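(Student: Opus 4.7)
The plan is to give an explicit hierarchical grid construction, in the spirit of Chazelle. Set $N = \lceil n^{1/d}\rceil$, place the points on a regular grid $P = [N]^d$ (padded or trimmed to size $n$), and set the target scaling parameter $k = \lfloor c_d \log n / \log \log n \rfloor$ for a sufficiently small $c_d > 0$. I would build $\mathcal{B}$ as a disjoint union of ``shape classes'' $\mathcal{B}_{\mathbf{s}}$ indexed by a carefully chosen finite family $\mathcal{S}$ of shape vectors $\mathbf{s} = (s_1,\ldots,s_d) \in \mathbb{Z}_{>0}^d$, where each $\mathcal{B}_{\mathbf{s}}$ is (approximately) a tiling of $P$ by axis-parallel boxes of dimensions $s_1 \times \cdots \times s_d$, placed on a shape-dependent offset grid.

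The concrete targets are $|\mathcal{S}| \asymp k^{d-1}$ and $\prod_i s_i \asymp k^{d-1}$ for every $\mathbf{s} \in \mathcal{S}$. A plausible source of such $\mathcal{S}$ is the set of integer factorizations of the integers in an interval $[V_0, CV_0]$ with $V_0 = k^{d-1}$, pruned to size $k^{d-1}$; once this is fixed, the total number of boxes is $\sum_{\mathbf{s} \in \mathcal{S}} n/\prod s_i \leq |\mathcal{S}| \cdot n / V_0 = O(n)$, as required, while each grid point lies in one box per class, giving average degree exactly $|\mathcal{S}| \asymp k^{d-1} = (\log n/\log\log n)^{d-1}$.

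The main steps would therefore be: (i) specify $\mathcal{S}$ together with the tiling offsets; (ii) verify $|\mathcal{B}| \leq n$; (iii) read off the average degree; and (iv) verify the $K_{2,2}$-freeness. Steps (ii)--(iii) are a direct counting once $\mathcal{S}$ is chosen, so the heart of the argument lies in (iv).

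Step (iv) is the main obstacle, and it is what forces the delicate choice of $\mathcal{S}$. Two tilings of different shapes $\mathbf{s} \neq \mathbf{s}'$ can interact badly: a box of shape $\mathbf{s}$ meets a box of shape $\mathbf{s}'$ in an axis-parallel subbox of dimensions $(\min(s_i,s'_i))_i$, which a priori contains many points of $P$. To enforce $K_{2,2}$-freeness I would (a) restrict $\mathcal{S}$ to be an antichain under the componentwise partial order, so that for every pair of distinct shapes some coordinate shrinks; (b) independently randomly shift the tiling grid of each shape; and (c) apply the deletion method to remove one incidence per remaining $K_{2,2}$. The quantitative estimate required from (c) is that the expected number of $K_{2,2}$'s is $o(nk^{d-1})$, so the average degree survives deletion; proving this reduces to the following pair-by-pair count: for each pair of lattice points at coordinate-wise displacement $(\delta_1,\ldots,\delta_d)$, bound the number of $\mathbf{s} \in \mathcal{S}$ with $s_i \geq \delta_i$ for all $i$, weighted by the probability (over the random offset) that both points lie in a common tile of that shape. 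Making this count sharp, rather than losing a $\log\log n$ factor per dimension, is exactly where the precise asymptotics of the divisor function on the hyperbola $\prod s_i \asymp k^{d-1}$ enter the calculation; this is the delicate quantitative step and the one I expect to consume the bulk of the work.
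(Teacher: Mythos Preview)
Your counting in steps (ii)--(iii) is fine, but step (iv) does not go through as written. With a grid point set and shapes $\mathbf{s}$ satisfying $\prod_i s_i\asymp k^{d-1}$, a randomly shifted $\mathbf{s}$-tile and $\mathbf{s}'$-tile share, in expectation, $\Theta\bigl(\prod_i\min(s_i,s'_i)\bigr)$ grid points, and the expected number of $K_{2,2}$'s contributed by the pair $(\mathbf{s},\mathbf{s}')$ is $\Theta\bigl(n\cdot(\prod_i\min(s_i,s'_i)-1)\bigr)$. Even under the antichain restriction, for essentially all of the $\binom{|\mathcal{S}|}{2}\asymp k^{2(d-1)}$ pairs the supports $\{i:s_i\geq 2\}$ and $\{i:s'_i\geq 2\}$ must overlap (there are only $d$ coordinates), so $\prod_i\min(s_i,s'_i)\geq 2$; already this gives $\gtrsim n\,k^{2(d-1)}$ expected $K_{2,2}$'s, far exceeding the $n\,k^{d-1}$ incidences. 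For the concrete staircase family in $d=2$ (shapes $(a,\lfloor Ck/a\rfloor)$, $a=1,\dots,k$) the count is $\Theta(nk^3)$. So the deletion step cannot rescue the average degree, no matter how you tune the divisor-function estimate.

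The missing idea is a large \emph{gap} between the box volume and the pairwise intersection volume, and the paper obtains it by going dyadic: side lengths are powers of a large base $s=\Theta(k^{2d})$, the shape family is $\{\mathbf{t}\in\mathbb{N}^d:\sum_i t_i=k\}$ (so $|\mathcal{S}|=\binom{k+d-1}{d-1}$ with no divisor-counting needed), and any two distinct boxes intersect in a block of volume at most $s^{k-1}=s^{-1}\cdot(\text{box volume})$. With that structure one randomizes the \emph{points} rather than the offsets: taking $2n$ uniform points in $[0,m]^d$, the expected number of pairs landing in a common ``small'' block is $O(n\,|T_k|^2/s)=o(n)$, so deleting one point per bad pair yields an honestly $K_{2,2}$-free incidence graph while keeping every surviving point at degree exactly $|T_k|$. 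Your proposal would be repaired by replacing the arbitrary-factorization family with this base-$s$ family; the antichain/random-shift/divisor-function machinery then becomes unnecessary.
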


  We present this construction, as we build on it later. We highlight that the author of this paper \cite{semilin21} also proved that there exists a set of $n$ points and $n$ rectangles in the plane, whose incidence graph has girth $g$ and average degree $\Omega_g(\log\log n)$.

  \medskip

 Given a graph $G$, its \emph{separation dimension} is the smallest positive integer $d$ for which there exists an embedding $\phi:V(G)\rightarrow \mathbb{R}^d$ satisfying the following. If $\{x,y\}$ and $\{x',y'\}$ are disjoint edges of $G$, then the box spanned by $\phi(x)$ and $\phi(y)$ is disjoint from the box spanned by $\phi(x')$ and $\phi(y')$. Alon, Basavaraju, Chandran, Mathew, and Rajendraprasad \cite{ABCMR18} conjectured that for every $d$ there exists $c=c(d)$ such that every graph of separation dimension $d$ has average degree at most $c$. They proved the $d=2$ case of their conjecture, while Scott and Wood \cite{SW21} confirmed it in the case $d=3$. In general, Scott and Wood showed that an $n$-vertex graph of separation dimension $d$ has average degree at most $O_d((\log n)^{d-3})$. 
 
 However, it was observed by Tomon and Zakharov \cite{TZ21} that if $G$ is a $K_{2,2}$-free incidence graph of points and rectangles, then $G$ has separation dimension at most 4. Therefore, the $d=2$ case of Theorem \ref{thm:box} implies the existence of graphs on $n$ vertices of separation dimension 4, with average degree $\Omega(\frac{\log n}{\log\log n})$, thus disproving the conjecture. Following the ideas of \cite{TZ21}, we show that a $K_{2,2}$-free incidence graph of points and boxes in $\mathbb{R}^d$ has separation dimension at most $2d$, thus establishing the following corollary.

 \begin{corollary}\label{cor:sepdim}
 Let $d$ be a positive integer. For every sufficiently large $n$, there exists a graph on $n$ vertices with average degree at least $c(\frac{\log n}{\log\log n})^{d-1}$ of separation dimension at most $2d$, where $c=c(d)>0$ only depends on $d$.
 \end{corollary}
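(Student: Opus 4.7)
The plan is to deduce the corollary directly from Theorem \ref{thm:box} by establishing the separation-dimension bound advertised in the text: any $K_{2,2}$-free incidence graph $G$ of points and boxes in $\mathbb{R}^d$ has separation dimension at most $2d$. Once this is shown, applying Chazelle's construction yields $n$ points and $n$ boxes whose incidence graph has $2n$ vertices and average degree $\Omega((\log n/\log\log n)^{d-1})$, and we rename $n$ to finish.

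The embedding $\phi: P \cup \mathcal{B} \to \mathbb{R}^{2d}$ I would use is the natural ``doubling'' generalization of the planar construction of Tomon--Zakharov. For a point $p=(x_1,\ldots,x_d) \in P$, set
\[
\phi(p) = (x_1,x_1,x_2,x_2,\ldots,x_d,x_d),
\]
and for a box $B = \prod_{i=1}^d [a_i,b_i] \in \mathcal{B}$, set
\[
\phi(B) = (a_1,b_1,a_2,b_2,\ldots,a_d,b_d).
\]
Since $p \in B$ means $a_i \le x_i \le b_i$ for each $i$, the bounding box of $\phi(p)$ and $\phi(B)$ in $\mathbb{R}^{2d}$ factors coordinatewise as $\prod_{i=1}^d [a_i,x_i]\times [x_i,b_i]$.

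Now I would verify the separation condition. Suppose $\{p,B\}$ and $\{p',B'\}$ are disjoint edges of $G$ whose associated bounding boxes meet. In each coordinate $i$, both $[a_i,x_i]\cap[a'_i,x'_i]$ and $[x_i,b_i]\cap[x'_i,b'_i]$ are nonempty. Case-splitting on whether $x_i \le x'_i$ or $x'_i \le x_i$, a short calculation (essentially the one-dimensional case) shows $x_i \in [a'_i,b'_i]$ and $x'_i \in [a_i,b_i]$ in either case. Doing this for every $i$ yields $p \in B'$ and $p' \in B$, so $\{p,p'\}\times\{B,B'\}$ is a $K_{2,2}$ in $G$; since the edges $\{p,B\}$ and $\{p',B'\}$ are disjoint, $p\neq p'$ and $B\neq B'$, contradicting that $G$ is $K_{2,2}$-free.

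There is no real obstacle here: the whole argument is a clean coordinatewise reduction to the single-variable observation that two overlapping ``left'' intervals plus two overlapping ``right'' intervals, sharing a common endpoint $x_i$ vs.\ $x'_i$, force mutual containment. The only thing to double-check is that Chazelle's construction in Theorem \ref{thm:box} really produces $2n$ vertices of the right average degree (so the rescaling $n \mapsto n/2$ only changes the constant $c$), which it plainly does.
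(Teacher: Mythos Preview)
Your proposal is correct and follows essentially the same approach as the paper: both embed $V(G)$ into $\mathbb{R}^{2d}$ by ``doubling'' each coordinate, and both derive a $K_{2,2}$ from a hypothetical pair of disjoint edges whose bounding boxes meet. The only cosmetic difference is that the paper negates half the coordinates (sending $p\mapsto(p(1),\dots,p(d),-p(1),\dots,-p(d))$ and $B\mapsto(b_1,\dots,b_d,-a_1,\dots,-a_d)$), which makes $p\in B$ equivalent to $\phi(p)\preceq\phi(B)$ in the product order and lets the box-intersection argument go through without your coordinatewise case split; after negating your odd coordinates the two embeddings coincide up to reordering.
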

 
 Recently, the author of this paper \cite{semilin21}  studied Ramsey properties of so called semilinear graphs to model coloring properties of intersection and disjointness graph. A graph $G$ is \emph{semilinear} of complexity $t$, if the vertices of $G$ are points in some real space $\mathbb{R}^d$, and the edges depend only on the sign pattern of $t$ linear functions $f_1,\dots,f_t:\mathbb{R}^d\times \mathbb{R}^d\rightarrow \mathbb{R}$. In \cite{semilin21}, it is shown that every semilinear graph $G$  on $n$ vertices of complexity $t$ satisfies $\chi(G)\leq (\omega(G)\log n)^{O_t(1)}$.  We raised the question whether the power of $\log n$ in the upper bound  needs to increase with $t$. Combining Theorem \ref{thm:box} with ideas of \cite{semilin21}, one can show that there exist triangle-free semilinear graphs of complexity $O(d)$ on $n$ vertices with chromatic number $\Omega((\frac{\log n}{\log\log n})^{d-1})$, answering this question.

 \subsection{Delaunay graphs with respect to boxes}

 The \emph{Delaunay graph} of a set of points $P$ in the plane is the graph on vertex set $P$ in which $x$ and $y$ are joined by an edge if there exists a disk containing $x$ and $y$, and no other point of $P$. The Delaunay graph of every set of $n$ points is planar and thus contains an independent set of size at least $n/4$. As observed by Even, Lotker, Ron, and Smorodinsky \cite{ELRS}, this fact implies that any set of $n$ points has a \emph{conflict-free coloring} with respect to disks using $O(\log n)$ colors. That is, a coloring of $P$ such that for every disk $D$ containing a point of $P$ there is a color assigned to exactly one element of $P\cap D$. Conflict-free colorings are motivated by, for example, frequency assignment problems in cellular telephone networks. We refer the interested reader to the survey of Smorodinsky \cite{S13}.

 Generally, given a set of points $P$ in $\mathbb{R}^d$ and a collection $\mathcal{C}$ of subsets of $\mathbb{R}^d$, one can define the \emph{Delaunay graph of $P$ with respect to $\mathcal{C}$}, denoted by $D_{\mathcal{C}}(P)$, as the graph on vertex set $P$, in which $x$ and $y$ are joined by an edge if there exists some $C\in\mathcal{C}$ with $C\cap P=\{x,y\}$. Given a conflict-free coloring of $P$ with respect to $\mathcal{C}$ (defined analogously), every colorclass is an independent set of $D_{\mathcal{C}}(P)$. This motivates the study of independence numbers of Delaunay graphs.
 
 Even et al. \cite{ELRS} and Har-Peled, Smorodinsky \cite{HS} asked whether the Delaunay graph of a set of $n$ points in the plane with respect to rectangles contains an independent set of size $\Omega(n)$. This was disproved by Chen, Pach, Szegedy, and Tardos \cite{CPST}, who showed that this independence number might be as small as $O(\frac{n(\log\log n)^2}{\log n})$. On the other hand, the best known lower bound is $\Omega(n^{0.617})$ due to Ajwani, Elbassioni, Govindarajan, and Ray \cite{AEGR}. Here, we extend the upper bound for boxes in $\mathbb{R}^d$ for $d\geq 3$.

 \begin{theorem}\label{thm:Delaunay}
     For every $d\geq 3$ there exists $c>0$ such that the following holds for every sufficiently large $n$. There exists a set of $n$ points  in $\mathbb{R}^d$ whose Delaunay graph with respect to boxes has independence number at most $$\frac{cn(\log\log n)^{(d+3)/2}}{(\log n)^{(d-1)/2}}.$$
 \end{theorem}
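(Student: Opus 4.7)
The plan is to build on Chazelle's construction from Theorem~\ref{thm:box} via a random sampling argument. Let $(P_0,\mathcal{B}_0)$ be a $K_{2,2}$-free configuration of $N$ points and $N$ boxes with average incidence degree $D = \Theta(((\log N)/\log\log N)^{d-1})$; by a standard pigeonhole step I would pass to a ``regular'' subfamily in which every point lies in $\Theta(D)$ boxes and every box contains $\Theta(D)$ points. Moreover, by inspecting the explicit construction underlying Theorem~\ref{thm:box}, I would verify that inside each box $B \in \mathcal{B}_0$ the points $P_0 \cap B$ can be assumed to form a ``staircase'' (pairwise incomparable in the coordinatewise partial order) such that the axis-aligned bounding box of any two staircase-consecutive points of $P_0 \cap B$ contains no other point of $P_0$.

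Next I would sample $P \subseteq P_0$ by keeping each point independently with probability $q = c_0 D^{-1/2}$. With high probability $n := |P| \sim qN = N/D^{1/2}$, so $D = \Theta(((\log n)/\log\log n)^{d-1})$. Inside each $B \in \mathcal{B}_0$ the sampled points form a subset of the staircase, and by the isolation property above, any two staircase-consecutive sampled points form an edge of the Delaunay graph of $P$ (with the corresponding bounding box as a witness). This already produces a Hamiltonian path among the $\sim D^{1/2}$ sampled points of each $B$.

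The heart of the proof is a per-box bound: with positive probability over the sampling, every $B \in \mathcal{B}_0$ satisfies $|S \cap B| \leq (\log\log n)^{(d+3)/2}$ for every independent set $S$ in the Delaunay graph of $P$. Granting this, a double count yields
\[
D \cdot |S| \;=\; \sum_{B \in \mathcal{B}_0} |S \cap B| \;\leq\; N \cdot (\log\log n)^{(d+3)/2},
\]
so $|S| \leq (N/D)(\log\log n)^{(d+3)/2} = (n/D^{1/2})(\log\log n)^{(d+3)/2}$, which matches the claimed bound once one substitutes $D^{1/2} = ((\log n)/\log\log n)^{(d-1)/2}$.

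The main obstacle is this per-box bound on $|S \cap B|$. The Hamiltonian-path structure from the staircase alone would only yield $|S \cap B| \leq |P \cap B|/2 \sim D^{1/2}/2$, which is far too weak. To tighten it, I would exploit that Chazelle's construction is hierarchical: each $B$ refines into sub-boxes at $d-1$ nested levels of depth $\Theta((\log n)/\log\log n)$ each, and the random sampling creates additional Delaunay edges at every level. A Chernoff-plus-union-bound argument across the $\Theta(((\log n)/\log\log n)^{d-1})$ relevant sub-boxes inside each $B$, combined with the staircase isolation, should force $|S \cap B|$ down to a polylogarithmic function, with the exponent $(d+3)/2$ emerging from the concentration slack collected across the $d-1$ hierarchy levels. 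A key technical subtlety is to verify that Delaunay edges witnessed ``inside $B$'' are not spoiled by points of $P$ lying in other boxes of $\mathcal{B}_0$, which is exactly what the $K_{2,2}$-free property of the underlying incidence configuration is designed to guarantee.
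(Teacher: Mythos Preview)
Your proposal has a genuine gap at exactly the point you flag as ``the main obstacle.'' The per-box bound $|S\cap B|\le (\log\log n)^{(d+3)/2}$ is doing all the work, and the sketch you give for it does not hold up. After sampling, each box $B$ contains roughly $D^{1/2}$ points of $P$; you need the Delaunay graph restricted to $P\cap B$ to have independence number polylogarithmic in $n$. Your hierarchical argument would require that, at each of the $\Theta(\log n/\log\log n)$ refinement levels, a definite fraction of the surviving independent-set points be killed by Delaunay edges witnessed inside sub-blocks. But the sub-blocks at a given level partition $B$, so a priori each one contains only $O(1)$ sampled points in expectation, and Chernoff gives you nothing useful there; the union bound over $\Theta(D)$ sub-blocks then has no traction. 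Moreover, the ``staircase'' structure you assume is not a feature of the construction for $d\ge 3$: the paper uses a random point set (Lemma~\ref{lemma:pointset}), and even with a designed point set there is no reason the bounding box of two staircase-consecutive points of $P_0\cap B$ avoids all other points of $P_0$; the $K_{2,2}$-freeness only says that no two blocks of $\mathcal{B}$ share two points, which is a much weaker statement.

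The paper takes a different route that sidesteps the per-box bound entirely. It works with the auxiliary graph $G$ on $Q$ in which two points are adjacent iff they lie in a common block of $\mathcal{B}$, and proves a supersaturation lemma (Lemma~\ref{lemma:saturation}): any $C\subset Q$ of size $\lambda m^{d-1}$ has a vertex of degree $\ge \tfrac{\lambda}{2}|T_k|$ in $G[C]$. This feeds the graph container method (Lemma~\ref{lemma:container}), yielding a family $\mathcal{C}$ of at most $\exp\bigl(O(m^{d-1}(\log\log N)^2/|T_k|)\bigr)$ containers, each of size $\le 3m^{d-1}$, covering all independent sets of $G$. One then samples with probability $p\asymp |T_k|^{-3/2}$ and applies Chernoff plus a union bound over $\mathcal{C}$ (not over boxes) to control $\alpha(G[P_0])$. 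Finally, one removes a point from each triple of $P_0$ lying in a common block, so that every block of $\mathcal{B}$ contains at most two points of $P$; then the block itself is the Delaunay witness, and $G[P]$ is a spanning subgraph of the Delaunay graph. The container step is the idea you are missing; your double-counting framework can be rescued only once you have a global handle on independent sets, and that is precisely what containers provide.
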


 In particular, Chen, Pach, Szegedy, and Tardos \cite{CPST} proved that their upper bound is achieved by a uniform random point set in $[0,1]^{2}$ with high probability. Our upper bound in Theorem \ref{thm:Delaunay} is also achieved by a uniform random point set in $[0,1]^{d}$, after some modification. However, we note that our analysis of the Delaunay graph of a random point set is quite different from that of \cite{CPST}, which we were unable to extend already for the case $d=3$. We employ a more graph theoretic approach, involving the celebrated \emph{graph container method} \cite{KW82,S05}.

\bigskip
\noindent
 \textbf{Organization.} This paper is organized as follows. In Section \ref{sect:lines}, we prove Theorem \ref{thm:main}. Then, in Section \ref{sect:Zar}, we prove Theorem \ref{thm:box} and Corollary \ref{cor:sepdim}, and in Section \ref{sect:Del}, we prove Theorem \ref{thm:Delaunay}. We conclude the paper with some open problems and remarks. 
 
\section{Coloring lines --- Proof of Theorem \ref{thm:main}}\label{sect:lines}

In this section, we prove Theorem \ref{thm:main}. For integers $a\leq b$, let $[a,b]=\{a,\dots,b\}$ and $[a]=\{1,\dots,a\}$. First, we show that there is a set of $\Omega(N^{3/2})$ vectors in $[N]^3$ such that any three of them are linearly independent (over $\mathbb{R}$). This bound is the best possible \cite{BHPT}. Constructions of such sets and generalizations are already available \cite{BCV,BHPT,SudTom22}. We need some extra properties as well, and the most convenient way to ensure these properties is to give self-contained proof.   

Say that a vector $(a,b,c)\in \mathbb{Z}^3$ is \emph{indivisible} if the greatest common divisor of $a,b,c$ is 1.

\begin{lemma}\label{lemma:independent_vectors}
There exists $c,\varepsilon>0$ such that the following holds for every positive integer $N$. There exists a set $S\in [\lceil \varepsilon N\rceil, N]^3$ of size at least $c N^{3/2}$ such that any 3 elements of $S$ are linearly independent, and every element of $S$ is indivisible.
\end{lemma}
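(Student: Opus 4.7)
The plan is to combine the rational normal curve modulo a prime with a careful lifting argument, in the spirit of the cited constructions. I would first choose a prime $p = \Theta(N^{1/2})$, which exists by Bertrand's postulate. For each $t \in \mathbb{F}_p^*$, set $u_t = (1, t, t^2)$ and consider the fiber $V_t$ of integer vectors $v \in [\lceil \varepsilon N\rceil, N]^3$ satisfying $v \equiv \lambda u_t \pmod p$ for some $\lambda \in \mathbb{F}_p^*$. A coordinate-wise count gives $|V_t| = \Theta((N/p)^3 \cdot p) = \Theta(N^2)$. The key observation is that three vectors from three distinct fibers are automatically linearly independent over $\mathbb{R}$: modulo $p$ they are nonzero scalar multiples of three distinct points $u_{t_i}$ on a non-degenerate conic in $\mathbb{F}_p \mathbb{P}^2$, and the Vandermonde determinant $\prod_{i<j}(t_j - t_i)$ is nonzero modulo $p$.

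The task then reduces to selecting $\Theta(N)$ lifts per fiber so that within-fiber triples are also linearly independent, which would give $|S| = \Theta(p \cdot N) = \Theta(N^{3/2})$. Expanding $\det(v_1, v_2, v_3)$ with $v_i = \lambda_i u_t + p w_i$ by multilinearity, all terms with at most one factor of $p$ vanish because of repetitions of $u_t$, and the leading contribution is
\[
\det(v_1, v_2, v_3) = p^2 \bigl[\lambda_1 \det(u_t, w_2, w_3) - \lambda_2 \det(u_t, w_1, w_3) + \lambda_3 \det(u_t, w_1, w_2)\bigr] + p^3 \det(w_1, w_2, w_3).
\]
Non-vanishing of the bracket modulo $p$ is equivalent to a non-collinearity condition on the images of the $w_i$ in the quotient $\mathbb{F}_p^3/\langle u_t \rangle \cong \mathbb{F}_p^2$ (with the $\lambda_i$ entering as affine weights). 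Using the classical fact that $\mathbb{F}_p^2$ admits an arc (a set with no three collinear points) of size $\Theta(p)$, for example the parabola $\{(a, a^2) : a \in \mathbb{F}_p\}$, one can select $\Theta(p)$ projection classes per fiber whose images avoid three-wise collinearities, and include $\Theta(N^{1/2})$ lifts per projection class.

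The main obstacle I expect is handling within-fiber triples where two or three lifts share the same $\mathbb{F}_p^2$-projection: the bracketed leading term then vanishes identically modulo $p$, and one must inspect a subleading contribution of order $p^3$ to conclude that the determinant is nonzero. A finer expansion reveals that this subleading term also reduces modulo $p$ to a determinantal non-degeneracy condition, which can be enforced either by a second-order constraint on the $w_i$ modulo $p^2$, or by a mild random deletion within each projection class. Finally, indivisibility is essentially free: a positive density (asymptotically $1/\zeta(3)$) of integer vectors in any axis-aligned box are primitive, so restricting $S$ to its primitive elements reduces $|S|$ only by a constant factor, yielding the claimed $cN^{3/2}$ bound.
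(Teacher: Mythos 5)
Your cross-fiber reduction is sound and in fact close in spirit to the actual construction (moment curve modulo $p$, integer lifts, and the observation that a $\mathbb{Q}$-dependence among integer vectors can be taken primitive and hence survives reduction mod $p$). The genuine gap is exactly where you locate "the main obstacle": with $p=\Theta(N^{1/2})$ you must extract $\Theta(N)$ vectors from each fiber, and the within-fiber triples are not handled. Two remarks on your bracket analysis: the non-vanishing condition for a cross-class triple involves the \emph{normalized} images $\bar w_i\lambda_i^{-1}$, so an arc condition on the raw $\bar w_i$ only suffices if you also freeze $\lambda$; and once two of the three points share a projection class the bracket vanishes identically mod $p$, throwing you onto the $p^3$ term. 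Neither of your proposed fixes closes this. Random deletion fails because bad triples inside a class need not be rare: in the fiber $t=1$, for instance, the vectors $\lambda u_t+pw_0+p\mu u_t$ with $\mu$ ranging over $\sim N/p$ integers all lie in the plane spanned by $u_t$ and $w_0$ and all belong to the same projection class, so an entire class of size $\Theta(N^{1/2})$ can be coplanar through the origin and then contributes at most two usable points, not $\Theta(N^{1/2})$ lifts. The "second-order constraint modulo $p^2$" is a restatement of the problem one level down rather than a solution: since $p^2=\Theta(N)$, each residue class mod $p^2$ meets the box in only $O(1)$ points, so the whole selection happens at that level, the relevant non-degeneracy condition mixes the $p^2$ and $p^3$ terms (when the bracket is divisible by $p$ you need the quotient plus $\det(w_1,w_2,w_3)$ to be nonzero), and no argument is given. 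This recursion is the entire content of the lemma at your choice of $p$, and it is missing.

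There is also a secondary gap in the indivisibility step: the fact that a $1/\zeta(3)$ proportion of \emph{all} lattice points in a box are primitive says nothing about your specific set $S$ of size $\Theta(N^{3/2})$ inside a box containing $\Theta(N^3)$ lattice points; a priori every element of $S$ could be imprimitive, and dividing out the gcd can push coordinates below $\lceil\varepsilon N\rceil$, so primitivity must be built into the construction rather than imposed afterwards. For comparison, the paper's proof avoids both issues by taking the prime at scale $N^{3/2}$ (not $N^{1/2}$) and using a single integer representative per point of a randomly scaled moment curve $\{(a,bt,ct^2)\}$: every triple then involves three distinct curve parameters, so independence follows at once from the quadratic-polynomial (Vandermonde) argument; the representative of minimal max-norm is automatically indivisible; and a random choice of $(a,b,c)$ together with a counting argument ensures that for at least half of the curve points this representative has all coordinates between $N/200$ and $N$. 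A correct completion of your route would require a genuinely new argument for the within-fiber selection, whereas choosing $p\approx N^{3/2}$ removes that difficulty altogether.
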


\begin{proof}
Let $p$ be a prime such that $\frac{1}{2}N^{3/2}<p<N^{3/2}$, which exists by Bertrand's postulate. First, we shall work over the field $\mathbb{F}_p$. For $\mb{u},\mb{v}\in\mathbb{F}_p^3\setminus\{0\}$, write $\mb{u}\sim \mb{v}$ if $\mb{u}=\lambda \mb{v}$ for some $\lambda\in\mathbb{F}_p$. Clearly, $\sim$ is an equivalence relation. A \emph{representative} of $\mb{u}$ is any element from its equivalence class.

Let $Q=[-N,N]^3$, then every nonzero element of $\mathbb{F}_p^3$ has a representative in $Q$, see e.g. Lemma 4.1 in \cite{SudTom22}. Furthermore, let $R$ be the set of vectors in $Q$ which have a coordinate in $[-\frac{N}{200},\frac{N}{200}]$, then $|R|\leq 3\cdot (2N)\cdot (2N)\cdot \frac{N}{100}<\frac{N^3}{8}$. Say that $\mb{v}\in \mathbb{F}_p^3$ is \emph{bad} if it has a representative in $R$, otherwise say that $\mb{v}$ is \emph{good}. Clearly, the number of bad elements is at most $|R|(p-1)< \frac{N^3}{8}(p-1)<\frac{p^3}{2}$. In particular, we get that at least half of $(\mathbb{F}_p\setminus\{0\})^3$ is good, we write $F$ for the set of good elements.

Now choose randomly three numbers $a,b,c\in \mathbb{F}_p\setminus \{0\}$ from the uniform distribution, independently from each other, and set $T=\{(a,bt,ct^2): t\in\mathbb{F}_p\setminus \{0\}\}$. The set $T$ is also known as a \emph{moment curve}. Firstly, we show that any three elements of $T$ are linearly independent. Otherwise, there exists $\mb{z}\in\mathbb{F}_p^3\setminus \{0\}$ and distinct $t_1,t_2,t_3\in \mathbb{F}_p$ such that $\langle \mb{z},(a,bt_i,ct_i^2)\rangle=0$ for $i\in [3]$. But this means that $t_1,t_2,t_3$ are distinct roots of the nonzero quadratic polynomial $f(x)=a\mb{z}(1)+b\mb{z}(2)\cdot x+ c\mb{z}(3)\cdot x^2$, a contradiction.
Secondly, note that for every $t\in \mathbb{F}_p\setminus \{0\}$, the vector $(a,bt,ct^2)$ is uniformly distributed in $(\mathbb{F}_p\setminus\{0\})^3$. Therefore, with probability at least $1/2$ the vector $(a,bt,ct^2)$ is good. This implies that there is a choice for $a,b,c$ such that at least half of the elements of $T$ are good. Fix such a choice. 

Now for each $\mb{u}\in F\cap T$, let $\mb{u}'\in \mathbb{Z}^3$ be a vector such that $\mb{u}'$ is a representative of $\mb{u}$ over $\mathbb{F}_p$, and the maximal absolute value of a coordinate of $\mb{u}'$ is minimal among such representatives. Clearly, $\mathbf{u}'$ is indivisible. Let $S_0=\{\mathbf{u}':\mathbf{u}\in F\cap T\}$. Any three vectors in $S_0$ are linearly independent over $\mathbb{Q}$, as they are linearly independent over $\mathbb{F}_p$. Also, for $\mb{x}\in S_0$ and $i\in [3]$, we have $\frac{N}{200}\leq |\mb{x}(i)|\leq N$. Finally, $|S_0|=|F\cap T|\geq \frac{p-1}{2}\geq \frac{N^{3/2}}{8}$. 

The set $S_0$ is almost what we want, we just need to get rid of the negative coordinates. Clearly, there exists $S_1\subset S_0$ of size at least $|S_1|\geq \frac{1}{8}|S_0|\geq \frac{N^{3/2}}{64}$ such that the sign-pattern of every vector in $S_1$ is the same. By flipping the negative coordinates of the elements of $S_1$, if necessary, we get a set $S$ with the desired properties. Hence, $c=\frac{1}{64}$ and $\varepsilon=\frac{1}{200}$ suffices.
\end{proof}

We remark that in order to get a bound of the form $n^{\Omega(1)}$ in Theorem \ref{thm:main}, it would have been enough to guarantee a set of size $N^{c}$ for any $c>0$ in Lemma \ref{lemma:independent_vectors}. It is not hard to argue that a random sample of $[\lceil \varepsilon N\rceil, N]^3$ of size $N^c$ gives a desired set if $c>0$ is sufficiently small.

\bigskip

Now we prepare the proof of Theorem \ref{thm:main}. In our arguments, we may assume that $n$ is sufficiently large, and we systematically omit the use of floors and ceilings whenever they are not crucial. Let $k>r$ be positive integers specified later with respect to $n$, and let $m:=\frac{k}{r}$. We assume that $k,r,m$ are also sufficiently large, which is ensured by their dependence on $n$.

Let $\varepsilon$ be the constant guaranteed by Lemma \ref{lemma:independent_vectors}, let  $N=\frac{m}{2}$, and fix a set $S\subset [\lceil \varepsilon N\rceil,N]^3$ satisfying the conditions of Lemma \ref{lemma:independent_vectors}. Then $|S|\geq c_s m^{3/2}$ for some absolute constant $c_s$, every coordinate of every element in $S$ is between $\frac{\varepsilon m}{2}$ and $\frac{m}{2}$, and any three vectors in $S$ are linearly independent. After removing some elements of $S$ arbitrarily, we may assume that $|S|=c_s m^{3/2}$.

For a vector $\mb{v}\in \mathbb{R}^3\setminus\{0\}$, say that a line $\ell$ in $\mathbb{R}^3$ is \emph{$\mb{v}$-type} if $\ell$ is parallel to $\mb{v}$. Let $\mathcal{L}$ be the set of lines $\ell$ such that $\ell$ is $\mb{v}$-type for some $\mb{v}\in S$, and $|\ell\cap [k]^{3}|\geq r$.

\begin{lemma}\label{lemma:size_of_L}
$|\mathcal{L}|\geq c_{\ell}\cdot \frac{k^3|S|}{r}$ for some constant $c_{\ell}>0$.
\end{lemma}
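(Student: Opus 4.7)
The plan is a double-counting argument on pairs $(\mathbf{p}, \mathbf{v})$ where $\mathbf{p}$ is an integer point lying deep inside $[k]^3$ and $\mathbf{v} \in S$. I would take $Q := [k/3, 2k/3]^3 \cap \mathbb{Z}^3$, so that $|Q| \geq (k/3)^3$ once $k$ is large. First I would verify that for every $(\mathbf{p}, \mathbf{v}) \in Q \times S$, the $\mathbf{v}$-type line $\ell$ through $\mathbf{p}$ lies in $\mathcal{L}$. Indeed, every coordinate of $\mathbf{v}$ is at most $m/2 = k/(2r)$, so for every integer $t$ with $|t| \leq r/2$ the translate $\mathbf{p} + t\mathbf{v}$ has every coordinate in $[k/3 - k/4,\, 2k/3 + k/4] \subset [1,k]$, yielding at least $r$ integer points of $\ell$ in $[k]^3$.

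The next step is to bound the multiplicity with which a line $\ell \in \mathcal{L}$ is produced by such pairs. Distinct elements of $S$ are not parallel (otherwise any three of them would be linearly dependent, contradicting Lemma \ref{lemma:independent_vectors}), so the direction $\mathbf{v}$ is uniquely determined by $\ell$. It then remains to bound the number of admissible $\mathbf{p}$'s, namely the integer points on $\ell$ in $Q$, which is at most $|\ell \cap [k]^3 \cap \mathbb{Z}^3|$. Because $\mathbf{v}$ is indivisible, the lattice points of $\ell$ form a single arithmetic progression with common difference $\mathbf{v}$; since at least one coordinate of $\mathbf{v}$ is at least $\varepsilon m/2$, the admissible range of the progression index has length at most $(k-1)/(\varepsilon m/2) = O(r)$, so $\ell$ carries only $O(r)$ lattice points in $[k]^3$ (with implicit constant depending on $\varepsilon$).

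Combining the two estimates,
\[
|\mathcal{L}| \;\geq\; \frac{|Q|\cdot|S|}{O(r)} \;=\; \Omega\!\left(\frac{k^3|S|}{r}\right),
\]
which is the desired inequality, with $c_\ell = \Theta(\varepsilon)$. The only mildly delicate step is the multiplicity bound: it uses both the non-parallelism of elements of $S$ (to recover $\mathbf{v}$ from $\ell$) and their indivisibility (to convert ``lattice points on $\ell$ inside $[k]^3$'' into ``length of the integer $t$-range''). Both of these are precisely the properties engineered into Lemma \ref{lemma:independent_vectors}, so the remainder of the argument is routine numerical bookkeeping driven by the coordinate bounds $\varepsilon m/2 \leq v_i \leq m/2$.
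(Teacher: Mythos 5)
Your proof is correct and follows essentially the same route as the paper: a double count of (lattice point, direction) pairs, where every such pair from a central subcube of $[k]^3$ spawns a line of $\mathcal{L}$ (using the upper bound $\mathbf{v}(i)\leq m/2=k/(2r)$), and the multiplicity of each line is $O(r/\varepsilon)$ via non-parallelism of distinct elements of $S$ together with indivisibility and the lower bound $\mathbf{v}(i)\geq \varepsilon m/2$. The paper uses $[k/2]^3$ with $t=0,\dots,r-1$ instead of your middle-third cube with $|t|\leq r/2$, a purely cosmetic difference.
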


\begin{proof}
    Let $\mb{v}\in S$ and $\mb{x}\in [k/2]^3$. As every coordinate of $\mb{v}$ is at most $N=\frac{k}{2r}$, the line $$\ell_{\mb{x},\mb{v}}=\{\mb{x}+t\cdot \mb{v}:t\in\mathbb{R}\}$$ contains at least $r$ points of $[k]^3$, and so is contained in $\mathcal{L}$. On the other hand, as every coordinate of $\mb{v}$ is at least $\frac{\varepsilon m}{2}=\frac{\varepsilon k}{2r}$ and $\mb{v}$ is indivisible, $\ell_{\mb{x},\mb{v}}$ contains at most $\frac{r}{\varepsilon}$ points of $[k/2]^3$. This means that every $\ell\in \mathcal{L}$ coincides with at most $\frac{r}{\varepsilon}$ lines $\ell_{\mb{x},\mb{v}}$ with $(\mb{x},\mb{v})\in [k/2]^3\times S$. Hence, 
    $$|\mathcal{L}|\geq \frac{|S|\cdot (k/2)^3}{r/\varepsilon}=\frac{\varepsilon k^3|S|}{8r}$$
    which shows that $c_{\ell}:=\frac{\varepsilon}{8}$ suffices. 
\end{proof}

 Our strategy to find the desired configuration of lines is to take a random sample of $\mathcal{L}$, each element sampled independently with some appropriate probability $p$, and then clean this sample a bit to eliminate the triangles. The challenging part is then to show that the resulting graph has large chromatic number, which is ensured by estimating its independence number. In order to execute this strategy, we need to analyze certain properties of $\mathcal{L}$.

Let $G$ be the intersection graph of $\mathcal{L}$. Then the independence number $\alpha(G)$ satisfies $\alpha(G)\leq \frac{k^3}{r}$, as each line in $\mathcal{L}$ contains at least $r$ points of $[k]^3$. Furthermore, as any three elements of $S$ are linearly independent, we immediately get that if three lines of $\mathcal{L}$ have a  pairwise nonempty intersection, then they must go through the same point $\mb{z}\in \mathbb{R}^3$. As a precaution, it is worth pointing out that $\mathbf{z}$ need \emph{not} be an element of $[k]^3$.

For distinct $\mb{u},\mb{v}\in S$, let $\mathcal{H}_{\mb{u},\mb{v}}$ denote the set of planes $H$ such that $H$ contains both a $\mb{u}$-type and a $\mb{v}$-type line of $\mathcal{L}$. Also, let $$\mathcal{H}=\bigcup_{\substack{\mb{u},\mb{v}\in S\\ \mb{u}\neq \mb{v}}}\mathcal{H}_{\mb{u},\mb{v}}.$$
The following observation is crucial.

\begin{lemma}\label{lemma:size_of_H}
 $|\mathcal{H}_{\mb{u},\mb{v}}|\leq c_h\cdot\frac{k^{3}}{r^2}$ for some constant $c_h>0$.
\end{lemma}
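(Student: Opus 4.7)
The plan is to exploit the fact that, by Lemma \ref{lemma:independent_vectors}, any two distinct elements of $S$ are linearly independent, so every plane $H \in \mathcal{H}_{\mathbf{u},\mathbf{v}}$ is forced to be parallel to the $2$-plane spanned by $\mathbf{u}$ and $\mathbf{v}$. Equivalently, $H$ has a normal direction parallel to the integer vector $\mathbf{w} := \mathbf{u} \times \mathbf{v}$, so all planes in $\mathcal{H}_{\mathbf{u},\mathbf{v}}$ are parallel translates of one another, each determined by the single scalar $c_H := \langle \mathbf{w}, \mathbf{x} \rangle$ for any $\mathbf{x} \in H$. Moreover, by definition $H$ contains a $\mathbf{u}$-type line of $\mathcal{L}$, which passes through some lattice point $\mathbf{p} \in [k]^3$; taking such a $\mathbf{p}$, the identity $c_H = \langle \mathbf{w}, \mathbf{p} \rangle$ shows that $c_H$ is an integer, since $\mathbf{w} \in \mathbb{Z}^3$.

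The next step is to bound the magnitude of $c_H$. Since each coordinate of $\mathbf{u}$ and $\mathbf{v}$ lies in $[\varepsilon N, N]$ with $N = m/2 = k/(2r)$, a direct expansion of the cross product gives $|\mathbf{w}(i)| \leq N^2 = k^2/(4r^2)$ for every $i \in [3]$. Hence, for any lattice point $\mathbf{p} \in [k]^3$,
$$|c_H| \,=\, |\langle \mathbf{w}, \mathbf{p} \rangle| \,\leq\, 3 \cdot \frac{k^2}{4r^2} \cdot k \,=\, \frac{3k^3}{4r^2}.$$
Since the map $H \mapsto c_H$ is injective (distinct parallel planes have distinct offsets) and takes integer values, it follows that $|\mathcal{H}_{\mathbf{u},\mathbf{v}}| \leq \frac{3k^3}{2r^2} + 1$, which is at most $c_h \cdot k^3/r^2$ for any absolute constant $c_h > 3/2$, provided $k/r$ is sufficiently large (as we have arranged).

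The argument is entirely direct, so there is no substantive obstacle. The only observation that really matters is that every plane in $\mathcal{H}_{\mathbf{u},\mathbf{v}}$ shares the fixed integer normal $\mathbf{u}\times\mathbf{v}$; this reduces the three-dimensional counting problem to counting integer values of a fixed linear form on $[k]^3$, whose coefficients have magnitude $O((k/r)^2)$, and this is precisely what produces the factor of $r^2$ in the denominator of the bound.
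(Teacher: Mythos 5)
Your proof is correct, but it takes a genuinely different route from the paper. The paper shows that each plane $H\in\mathcal{H}_{\mb{u},\mb{v}}$ contains at least $\frac{\varepsilon r^2}{16}$ points of $[k]^3$ (by sliding $r/2$ lattice points of a $\mb{u}$-type line in the direction $-\mb{v}$ and using the \emph{lower} bound $\varepsilon m/2$ on the coordinates to keep the shifted points inside $[k]^3$), and then concludes by pigeonhole since the planes of $\mathcal{H}_{\mb{u},\mb{v}}$ are pairwise disjoint. You instead use integrality: all planes in $\mathcal{H}_{\mb{u},\mb{v}}$ share the integer normal $\mb{w}=\mb{u}\times\mb{v}$, each contains a point of $[k]^3$ (any line of $\mathcal{L}$ does), so each is determined by an integer offset $\langle\mb{w},\mb{p}\rangle$ of magnitude at most $3kN^2=O(k^3/r^2)$, where only the \emph{upper} bound $N=k/(2r)$ on the coordinates of $S$ enters. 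Your argument is slightly leaner --- it needs neither the lower bound $\varepsilon m/2$ on coordinates nor the fact that lines of $\mathcal{L}$ carry $r$ lattice points, and it yields an absolute constant $c_h$ rather than $16/\varepsilon$ --- but it leans more heavily on integrality of the directions, whereas the paper's volume/pigeonhole count is more robust to perturbing the lattice structure. Two cosmetic points: pairwise linear independence of distinct elements of $S$ is what you need (it follows from the three-wise independence when $|S|\ge 3$, or directly from indivisibility, positivity and distinctness), and you can take, say, $c_h=3$ to absorb the $+1$ unconditionally since $r<k$ gives $k^3/r^2>1$, avoiding any ``sufficiently large'' caveat.
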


\begin{proof}
We show that for every $H\in \mathcal{H}_{\mb{u},\mb{v}}$, we have $|H\cap [k]^3|\geq \frac{\varepsilon r^2}{16}$. Then the lemma follows by setting $c_h=\frac{16}{\varepsilon}$, as the elements of $\mathcal{H}_{\mb{u},\mb{v}}$  are pairwise disjoint.

Let $\ell\in \mathcal{L}$ be a $\mb{u}$-type line contained in $H$. Then there exists $\mb{x}\in [k]^3\cap \ell$ such that $$\mb{x}_i:=\mb{x}-i\cdot \mb{u}\in [k]^3\cap \ell$$
for $i=0,\dots,r-1$. Using that each coordinate of $\mb{u}$ is at least $\frac{\varepsilon m}{2}$ and that $\mb{x}_{r-1}\in [k]^3$, we deduce that every coordinate of  $\mb{x}_i$ is at least  $\frac{\varepsilon m}{2}\cdot (r-1-i)$. 

Let $\mb{y}_{i,j}=\mb{x}_i-j\cdot \mb{v}$ for $i,j=0,\dots,r$. As $H$ contains a $\mb{v}$-type line, we have $\mb{y}_{i,j}\in H$.  Furthermore, as every coordinate of $\mb{v}$ is at most $m$, we have for $b\in [3]$ that
$$\mb{y}_{i,j}(b)\geq \mb{x}_i(b)-j\cdot \mb{v}(b)\geq \frac{\varepsilon m}{2}\cdot (r-1-i)-j\cdot m.$$
Hence, for $i=0,\dots,\lfloor \frac{r}{2}\rfloor-1$ and $j=0,\dots,\lfloor \frac{\varepsilon r}{4}\rfloor-1$, every coordinate of $\mb{y}_{i,j}$ is positive (and from above bounded by $k$ trivially), so 
$\mb{y}_{i,j}$ is contained in $[k]^3$. This gives $$\left\lfloor \frac{r}{2}\right\rfloor \cdot \left\lfloor\frac{\varepsilon r}{4}\right\rfloor> \frac{\varepsilon r^2}{16}$$
distinct points of $[k]^3\cap H$,  finishing the proof.
\end{proof}

This lemma has a number of important consequences. First, we use it to count triangles.

\begin{lemma}\label{lemma:triangles}
The number of triangles in $G$ is at most $c_t\cdot \frac{|S|^3k^9}{r^6}$ for some constant $c_t>0$.
\end{lemma}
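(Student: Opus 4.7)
The plan is to count the triangles of $G$ by first counting edges and then bounding the number of third lines per edge. Any triangle of $G$ consists of three lines $\ell_1,\ell_2,\ell_3\in\mathcal{L}$ of three distinct types $\mb{u},\mb{v},\mb{w}\in S$; as noted in the paragraph following Lemma~\ref{lemma:size_of_L}, the linear independence of $\{\mb{u},\mb{v},\mb{w}\}$ forces these three pairwise intersecting lines to be concurrent at a common point $\mb{z}$. In particular, once the edge $\{\ell_1,\ell_2\}$ is fixed, the third line $\ell_3$ is determined by its type alone (it is the unique line of that type through $\mb{z}=\ell_1\cap \ell_2$), so there are at most $|S|$ choices for $\ell_3$. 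Since each triangle is counted three times (once per edge), this gives $\#\{\text{triangles}\}\leq \tfrac{|S|}{3}\cdot e(G)$, and it suffices to bound $e(G)$.

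To bound $e(G)$, I partition edges according to the (unordered) type pair of the endpoints. For distinct $\mb{u},\mb{v}\in S$, any $\mb{u}$-type and $\mb{v}$-type line in $\mathcal{L}$ are non-parallel, so they intersect iff they are coplanar, and in that case they span a unique plane $H\in\mathcal{H}_{\mb{u},\mb{v}}$. For each such $H$, write $a_H$ and $b_H$ for the number of $\mb{u}$-type and $\mb{v}$-type lines of $\mathcal{L}$ lying in $H$. Lines of the same type are parallel and hence pairwise disjoint, so their at least $r$ lattice points in $[k]^3$ form disjoint subsets of $H\cap [k]^3$. Since any plane in $\mathbb{R}^3$ meets $[k]^3$ in at most $k^2$ points (project injectively onto a coordinate plane transverse to $H$), we get $a_H,b_H\leq k^2/r$, so $H$ contributes at most $a_H b_H\leq k^4/r^2$ edges of type $\{\mb{u},\mb{v}\}$.

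Summing over $H\in\mathcal{H}_{\mb{u},\mb{v}}$ and invoking Lemma~\ref{lemma:size_of_H} yields $O(k^7/r^4)$ edges per type pair, and summing over all $\binom{|S|}{2}$ type pairs gives $e(G)=O(|S|^2 k^7/r^4)$. Multiplying by $|S|/3$ then produces a triangle count of $O(|S|^3 k^7/r^4)$, which is at most $O(|S|^3 k^9/r^6)$ since $r\leq k$. I do not expect any serious obstacle here: the essential inputs are the concurrence forced by linear independence and the planar count from Lemma~\ref{lemma:size_of_H}, while the bound $|H\cap [k]^3|\leq k^2$ is a routine projection argument. The only mildly delicate point is being careful that, within a given plane $H$, the $\mb{u}$-lines of $\mathcal{L}$ are genuinely disjoint (which follows from their parallelism), so that the elementary pigeonhole $a_H\cdot r\leq |H\cap [k]^3|$ applies.
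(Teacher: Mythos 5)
Your proof is correct, but it takes a genuinely different route from the paper. The paper encodes each triangle directly: since the three lines are concurrent (by the linear independence of their types), each pair spans a plane, and the triple of planes $(H_1,H_2,H_3)$ with $H_i\in\mathcal{H}_{\mb{v}_{i+1},\mb{v}_{i+2}}$ determines the triangle via $\ell_i=H_{i+1}\cap H_{i+2}$; applying Lemma \ref{lemma:size_of_H} three times gives $|S|^3(c_hk^3/r^2)^3$. You instead bound the number of edges and then use concurrence only to say that, given an edge, the third line is determined by its type, contributing a factor $|S|/3$. Your edge count uses an ingredient the paper never needs: the observation that a plane meets $[k]^3$ in at most $k^2$ points, so that the parallel (hence disjoint) lines of a fixed type inside a plane $H\in\mathcal{H}_{\mb{u},\mb{v}}$ number at most $k^2/r$, giving at most $k^4/r^2$ edges per plane and, via Lemma \ref{lemma:size_of_H}, $O(|S|^2k^7/r^4)$ edges in total. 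All the individual steps check out (the injectivity of the projection, the disjointness of same-type lines in $H$, the uniqueness of the plane spanned by an intersecting pair, and the concurrence argument ruling out coplanar triangles). Note that your final bound $O(|S|^3k^7/r^4)$ is in fact sharper than the stated $O(|S|^3k^9/r^6)$ by a factor of $(r/k)^2$, and the stated bound follows since $r<k$; the paper's version trades this extra precision for a shorter argument that needs nothing beyond Lemma \ref{lemma:size_of_H}, while your sharper edge bound would even allow a slightly larger choice of $p$ in the later optimization.
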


\begin{proof}
    Let $\ell_1,\ell_2,\ell_3$ be three lines forming a triangle in $G$, and let $\mb{v}_i\in S$ be the type of $\ell_i$ for $i\in [3]$. Clearly, $\mb{v}_1,\mb{v}_2,\mb{v}_3$ are pairwise distinct, and then linearly independent by the choice of $S$. Hence, as we remarked earlier, $\ell_1,\ell_2,\ell_3$ must go through the same point. Let $H_i$ be the unique plane containing $\ell_{i+1}$ and $\ell_{i+2}$ (indices meant modulo 3), then $H_i\in \mathcal{H}_{\mb{v}_{i+1},\mb{v}_{i+2}}$. The crucial observation is that the triple $(H_1,H_2,H_3)$ uniquely determines $(\ell_1,\ell_2,\ell_3)$ as $\ell_i=H_{i+1}\cap H_{i+2}$. 

   Therefore, the number of triangles of $G$ is upper bounded by the number of triples $(H_1,H_2,H_3)\in \mathcal{H}_{\mb{v}_2,\mb{v}_3}\times \mathcal{H}_{\mb{v}_3,\mb{v}_1}\times \mathcal{H}_{\mb{v}_1,\mb{v}_2}$, where $\mb{v}_1,\mb{v}_2,\mb{v}_3\in S$. The number of such triples is at most $|S|^3 \cdot (\frac{c_hk^3}{r^2})^3$ by Lemma \ref{lemma:size_of_H}. Hence, $c_t=c_h^3$ suffices.
\end{proof}

Let $\mathcal{J}$ denote the set of maximal independent sets of $G$ with respect to containment. Next, we estimate the size of $\mathcal{J}$. The author is grateful to J\'anos Pach and G\'abor Tardos for the elegant idea of the next proof, see also the concluding remarks of \cite{boxes22} where the same ideas are discussed.

\begin{lemma}\label{lemma:independent}
$|\mathcal{J}|\leq 2^{c_{j}|S|^2k^3/r^2}$ for some constant $c_{j}>0$.
\end{lemma}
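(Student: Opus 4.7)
The plan is to give an injective encoding $\mathcal{J} \hookrightarrow \{0,1\}^{\mathcal{H}}$. Combined with the bound $|\mathcal{H}| \leq \binom{|S|}{2} \cdot c_h k^{3}/r^{2} < (c_h/2)|S|^{2}k^{3}/r^{2}$ coming from Lemma~\ref{lemma:size_of_H}, this immediately yields the claim with $c_j = c_h/2$.

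The structural backbone is the observation that, for each plane $H \in \mathcal{H}_{\mb{u},\mb{v}}$, any $\mb{u}$-type line on $H$ must meet any $\mb{v}$-type line on $H$, since they are coplanar and non-parallel (any two elements of $S$ are linearly independent, being part of a three-wise linearly independent set). Moreover, no plane contains three distinct directions from $S$ simultaneously, as these would be linearly dependent within the $2$-dimensional direction subspace of the plane; hence every $H \in \mathcal{H}$ has a well-defined unordered type-pair $\{\mb{u},\mb{v}\}$. Consequently, for every independent set $I$ of $G$, all lines of $I$ lying on a fixed $H \in \mathcal{H}_{\mb{u},\mb{v}}$ share a common direction, and I may define a bit $b_H(I) \in \{\mb{u},\mb{v}\}$ to be the direction \emph{not} represented by $I \cap H$ (if $I \cap H = \emptyset$ I break the tie towards $\mb{v}$ by convention).

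To finish, I will show that the vector $(b_H(I))_{H \in \mathcal{H}}$ determines $I$, by verifying that a $\mb{v}$-type line $\ell \in \mathcal{L}$ lies in $I$ if and only if no plane $H \ni \ell$ has $b_H(I) = \mb{v}$. If $\ell \in I$, then any such plane $H \in \mathcal{H}_{\mb{u},\mb{v}}$ already meets $I$ in the $\mb{v}$-type line $\ell$, so $\mb{v}$ is represented in $I \cap H$, forcing $b_H(I) = \mb{u}$. Conversely, if $\ell \notin I$, the maximality of $I$ produces some $\ell' \in I$ intersecting $\ell$; since all $\mb{v}$-type lines are parallel (hence pairwise disjoint), $\ell'$ has some type $\mb{u}\neq \mb{v}$, so the unique plane $H$ through $\ell,\ell'$ lies in $\mathcal{H}_{\mb{u},\mb{v}}$, and the presence of $\ell' \in I \cap H$ of type $\mb{u}$ forces $b_H(I) = \mb{v}$. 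The main step is simply organizing this encoding cleanly; I do not anticipate any serious obstacle, as the two required structural facts (coplanar non-parallel lines meet; a plane sees at most two directions of $S$) are both direct consequences of the three-wise linear independence built into $S$.
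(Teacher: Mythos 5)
Your proposal is correct and follows essentially the same route as the paper: both arguments exploit that the edges of $G$ are partitioned by the planes of $\mathcal{H}$ into complete bipartite graphs (one direction class versus the other on each plane), encode a maximal independent set by one binary choice per plane, and use maximality to show the encoding is injective, then apply Lemma \ref{lemma:size_of_H} to bound $|\mathcal{H}|$. Your explicit decoding rule is just the paper's closure $C(I)$ phrased pointwise, so there is nothing to add.
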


\begin{proof}
    For $H\in\mathcal{H}$, let $B_H$ be the subgraph of $G$ induced by the lines of $\mathcal{L}$ contained in $H$. Then $B_H$ is a complete bipartite graph. Indeed, $H$ contains lines of exactly two types, and any two lines of different types in $H$ have a nonempty intersection. Furthermore, every edge of $G$ is contained in a unique $B_H$. Therefore, $\{B_H\}_{H\in \mathcal{H}}$ forms a partition of the edge set of $G$ into complete bipartite graphs.
    
    We write $X_H$ and $Y_H$ for the vertex classes of $B_H$. Furthermore, let $I$ be an independent set of $G$. Define the set $C(I)\subset V(G)$ as follows. As $I$ is an independent set, $I$ intersects at most one of $X_H$ and $Y_H$ for $H\in \mathcal{H}$, let $Z_H\in \{X_H,Y_H\}$ the one it intersects. If $I$ is disjoint from $X_H\cup Y_H$, then set $Z_H=X_H$. For $v\in V(G)$, put $v$ into $C(I)$ if and only if for every $H\in \mathcal{H}$ for which $v\in X_H\cup Y_H$ holds, we have $v\in Z_H$.

    First of all, note that $I\subset C(I)$ trivially. Furthermore, $C(I)$ is an independent set. Indeed, if $e$ is an edge of $G$, then it is covered by some $B_H$, and then the endpoint of $e$ not in $Z_H$ cannot be contained in $C(I)$. This implies that if $I$ is a maximal independent set, then $I=C(I)$. Hence, writing $\mathcal{C}=\{C(I):I\mbox{ is an independent set of }G\}$, we get $\mathcal{J}\subset \mathcal{C}$. On the other hand, each element of $\mathcal{C}$ is determined by the sequence $\{Z_H\}_{H\in\mathcal{H}}$, so 
    $$|\mathcal{C}|\leq 2^{|\mathcal{H}|}\leq 2^{c_h|S|^2k^3/r^2},$$
    where the last inequality holds by Lemma \ref{lemma:size_of_H}. Hence, $c_j:=c_h$ suffices.
\end{proof}

Now we randomly sample the elements of $\mathcal{L}$ independently with probability $p\in (0,1)$, and let $H$ be the subgraph of $G$ induced by the sampled elements. In what follows, we analyze the expected properties of $H$. During our arguments, we use standard concentration inequalities. We refer the reader to \cite{AS} as a general reference.

\begin{lemma}[Multiplicative Chernoff bound]\label{lemma:chernoff}
 Let $X$ be the sum of independent indicator random variables. If $\lambda\geq 2\mathbb{E}(X)$, then
 $\mathbb{P}(X\geq \lambda)\leq e^{-\lambda/6}$. Also, 
 $\mathbb{P}(X\leq \mathbb{E}(X)/2)\leq e^{-\mathbb{E}(X)/8}.$
\end{lemma}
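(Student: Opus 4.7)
This is the textbook multiplicative Chernoff bound, so the plan is to apply the exponential moment method (Bernstein's trick) and then verify the claimed constants by elementary calculus. Write $X = X_1 + \dots + X_N$ with the $X_i$ independent $\{0,1\}$-valued and $\mathbb{E}(X_i) = p_i$, and set $\mu := \mathbb{E}(X) = \sum_i p_i$. For any $t > 0$, Markov's inequality applied to $e^{tX}$, together with independence and the bound $1+x\leq e^{x}$, gives
\[
\mathbb{P}(X \geq \lambda) \;\leq\; e^{-t\lambda}\,\mathbb{E}(e^{tX}) \;=\; e^{-t\lambda}\prod_{i=1}^{N}\bigl(1 + p_i(e^t-1)\bigr) \;\leq\; \exp\bigl(-t\lambda + \mu(e^t-1)\bigr),
\]
and applying the same argument to $-X$ yields $\mathbb{P}(X \leq \lambda) \leq \exp(t\lambda + \mu(e^{-t}-1))$ for every $t>0$.

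For the upper tail I would optimize by choosing $t = \ln(\lambda/\mu)$, which is positive since $\lambda \geq 2\mu$. Substituting and writing $\lambda = (1+\delta)\mu$ with $\delta \geq 1$, the right-hand side becomes $\exp\bigl(-\mu\bigl[(1+\delta)\ln(1+\delta) - \delta\bigr]\bigr)$. It then suffices to verify the elementary inequality $(1+\delta)\ln(1+\delta) - \delta \geq (1+\delta)/6$ for $\delta \geq 1$: it holds at $\delta = 1$ (since $2\ln 2 - 4/3 > 0$), and its derivative in $\delta$ equals $\ln(1+\delta) - 1/6$, which is strictly positive for $\delta \geq 1$. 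This gives $\mathbb{P}(X\geq \lambda) \leq e^{-\lambda/6}$, as claimed.

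For the lower tail I would plug $\lambda = \mu/2$ and $t = \ln 2$ into the second display, obtaining
\[
\mathbb{P}(X \leq \mu/2) \;\leq\; \exp\bigl(\tfrac{\ln 2}{2}\mu + \mu(\tfrac{1}{2} - 1)\bigr) \;=\; \exp\bigl(-\tfrac{1-\ln 2}{2}\,\mu\bigr),
\]
and since $(1-\ln 2)/2 > 1/8$ this is at most $e^{-\mu/8}$. There is really no obstacle here: the whole proof is a single optimization of a moment generating function, and the only thing that requires attention is the numerical check that the optimized Chernoff exponents dominate the clean constants $1/6$ and $1/8$ appearing in the statement. A slightly suboptimal choice such as the fixed value $t = \ln 2$ in the upper tail would also work, at the cost of a worse constant; using the optimal $t$ gives the cleanest route.
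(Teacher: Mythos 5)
Your proof is correct: the exponential moment method with $t=\ln(\lambda/\mu)$ for the upper tail and $t=\ln 2$ for the lower tail, plus the numerical checks $2\ln 2-\tfrac43>0$, $\ln(1+\delta)>\tfrac16$ for $\delta\geq 1$, and $\tfrac{1-\ln 2}{2}>\tfrac18$, does yield the stated constants $1/6$ and $1/8$. The paper gives no proof of this lemma at all — it simply cites Alon and Spencer as a general reference for standard concentration inequalities — and your argument is precisely the standard one found there, so there is nothing to reconcile; the only (negligible) omission is the trivial case $\mathbb{E}(X)=0$, where your choice of $t$ is undefined but both bounds hold immediately.
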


Let $T$ be the number of triangles in $H$. The next lemma summarizes the expected properties of $H$. The constants $c_{\ell},c_t,c_j$ are the constants guaranteed by Lemmas \ref{lemma:size_of_L}, \ref{lemma:triangles}, \ref{lemma:independent}, respectively. 

\begin{lemma}\label{lemma:random}
Let $p\geq  \frac{6c_{j}|S|^2}{r}$. Then,
\begin{enumerate}
    \item  $\mathbb{P}(|V(H)|\geq \frac{c_{\ell} p k^3|S|}{2r})\geq \frac{3}{4}$,

    \item  $\mathbb{P}(T\leq \frac{4c_tp^3|S|^3k^9}{r^6})\geq \frac{3}{4}$,

    \item   $\mathbb{P}(\alpha(H)\leq \frac{2pk^3}{r})\geq \frac{3}{4}$.
\end{enumerate}
\end{lemma}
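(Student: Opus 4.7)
The plan is to handle the three parts separately, since each asserts a probability $\geq 3/4$ event individually, so the proofs are essentially independent.

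Part (1) should be a routine application of the lower-tail Chernoff bound in Lemma \ref{lemma:chernoff}: the count $|V(H)|$ is a sum of $|\mathcal{L}|$ independent Bernoulli($p$) indicators with mean $p|\mathcal{L}|\geq c_\ell pk^3|S|/r$ by Lemma \ref{lemma:size_of_L}, and the desired threshold is exactly half this expectation. One just needs $e^{-\mathbb{E}/8}\leq 1/4$, which follows because the hypothesis $p\geq 6c_j|S|^2/r$, together with the assumption that $k,r,|S|$ all grow with $n$, forces $\mathbb{E}|V(H)|$ to be much larger than $8\ln 4$.

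Part (2) I would dispatch with Markov's inequality. Each triangle of $G$ survives in $H$ precisely when all three of its lines are sampled, which happens with probability $p^3$, so by linearity of expectation and Lemma \ref{lemma:triangles}, $\mathbb{E}T\leq c_t p^3|S|^3 k^9/r^6$. Markov immediately gives $\mathbb{P}(T\geq 4\mathbb{E}T)\leq 1/4$.

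Part (3) is the main obstacle, and is exactly where Lemma \ref{lemma:independent} enters (this is why the container-style bound on $|\mathcal{J}|$ was proved in the first place). Every independent set of $H$ is an independent set of $G$ lying in $V(H)$, hence a subset of some maximal independent set $I\in\mathcal{J}$. So it suffices to show that $|I\cap V(H)|\leq 2pk^3/r$ holds simultaneously for all $I\in\mathcal{J}$ with probability at least $3/4$. Fix such an $I$; using $\alpha(G)\leq k^3/r$ (noted just before Lemma \ref{lemma:size_of_H}), we have $\mathbb{E}|I\cap V(H)|=p|I|\leq pk^3/r$, so the upper-tail Chernoff bound with $\lambda = 2pk^3/r\geq 2\mathbb{E}|I\cap V(H)|$ yields
\[
\mathbb{P}\!\left(|I\cap V(H)|\geq \tfrac{2pk^3}{r}\right)\leq e^{-pk^3/(3r)}.
\]
A union bound together with Lemma \ref{lemma:independent} then bounds the failure probability by
\[
2^{c_j|S|^2k^3/r^2}\cdot e^{-pk^3/(3r)} \;=\; \exp\!\Bigl(c_j|S|^2k^3\ln 2/r^2 \;-\; pk^3/(3r)\Bigr).
\]
The delicate point is matching the hypothesis on $p$ to this exponent: the lower bound $p\geq 6c_j|S|^2/r$ is tailored so that $pk^3/(3r)\geq 2c_j|S|^2k^3/r^2$, making the exponent at most $-(2-\ln 2)c_j|S|^2 k^3/r^2$, which is far smaller than $-\ln 4$ once $n$ (hence $k,|S|$) is large. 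This balancing between the container bound on $|\mathcal{J}|$ and the sampling probability $p$ is the whole point of the statement, and is the only place where the numerical constant $6$ in the hypothesis matters.
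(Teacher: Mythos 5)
Your proposal is correct and follows essentially the same route as the paper: lower-tail Chernoff for $|V(H)|$, Markov applied to $\mathbb{E}(T)\leq c_tp^3|S|^3k^9/r^6$, and for part (3) a union bound over the maximal independent sets $\mathcal{J}$ combined with the upper-tail Chernoff bound, with the hypothesis $p\geq 6c_j|S|^2/r$ used exactly as you describe to dominate the container count from Lemma \ref{lemma:independent}. No gaps.
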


\begin{proof}
\begin{enumerate}
    \item  We have  $\mathbb{E}(|V(H)|)=p|\mathcal{L}|\geq \frac{c_{\ell} pk^3 |S|}{r}$. Hence, 
$$\mathbb{P}\left(|V(H)|< \frac{c_{\ell} p k^3|S|}{2r}\right)\leq \mathbb{P}\left(|V(H)|\leq \frac{1}{2}\mathbb{E}(|V(H)|)\right)\leq e^{-\mathbb{E}(|V(H)|)/8}<\frac{1}{4}.$$
Here, the second inequality holds by the multiplicative Chernoff bound, while the last inequality (generously) holds by our lower bound on  $\mathbb{E}(|V(H)|)$.

    \item  We have $\mathbb{E}(T)\leq \frac{c_tp^3|S|^3k^9}{r^6}$ by Lemma \ref{lemma:triangles}. Hence, by Markov's inequality, we can write
    $$\mathbb{P}\left(T>\frac{4c_tp^3|S|^3k^9}{r^6}\right)\leq \frac{\mathbb{E}(T)}{4c_tp^3|S|^3k^9/r^6}\leq \frac{1}{4}.$$

    \item Let $I\in \mathcal{J}$. As $\mathbb{E}(|I\cap V(H)|)=p|I|\leq \frac{pk^3}{r}$, we can apply the multiplicative Chernoff bound to get
$$\mathbb{P}\left(|I\cap V(H)|\geq \frac{2pk^3}{r}\right)\leq e^{-pk^3/3r}\leq 2^{-pk^3/3r}.$$
By the union bound and using Lemma \ref{lemma:independent},
$$\mathbb{P}\left(\exists I\in \mathcal{J}:|I\cap V(H)|\geq \frac{2pk^3}{r}\right)\leq |\mathcal{J}|\cdot 2^{-pk^3/3r}\leq 2^{c_{j}|S|^2k^3/r^2-pk^3/3r}.$$
Our lower bound on $p$ was chosen such that the right hand side is at most $2^{-c_{j}|S|^2k^3/r^2}<\frac{1}{4}$. Hence, with probability at least $3/4$, $H$ contains at most $\frac{2pk^3}{r}$ elements of every maximal independent set of $G$, which implies $\alpha(H)\leq \frac{2pk^3}{r}$.
\end{enumerate}
\end{proof}

From the previous estimates, we deduce the following.

\begin{lemma}\label{lemma:summary}
Let $p$ be such that 
\begin{equation}\label{equ:1}
\frac{6c_{j}|S|^2}{r}<p<\frac{c_{\ell}^{1/2}r^{5/2}}{4c_t^{1/2}|S|k^3}.
\end{equation}
Then there exists an induced subgraph $H'$ of $G$ such that $H'$ is triangle-free, $|V(H')|\geq \frac{c_{\ell} pk^3|S|}{4r}$ and $\alpha(H')\leq \frac{2pk^3}{r}.$
\end{lemma}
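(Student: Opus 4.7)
My strategy is to apply Lemma~\ref{lemma:random} and then delete one vertex from each triangle. The hypothesis $p \geq 6c_j|S|^2/r$ is exactly the lower bound required by Lemma~\ref{lemma:random}, so each of the three listed events holds with probability at least $3/4$. A union bound shows that with probability at least $1/4$, all three hold simultaneously, and in particular there exists a deterministic realization $H$ of the random sample for which
\[
|V(H)| \geq \frac{c_\ell p k^3 |S|}{2r}, \qquad T \leq \frac{4c_t p^3 |S|^3 k^9}{r^6}, \qquad \alpha(H) \leq \frac{2pk^3}{r}.
\]
I would fix such an $H$ for the rest of the argument.

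Next, I would form $H'$ by removing from $H$ one arbitrary vertex from each triangle, so that in total at most $T$ vertices are discarded. The resulting induced subgraph of $G$ is triangle-free by construction, and its independence number cannot exceed $\alpha(H) \leq \frac{2pk^3}{r}$, giving the required bound on $\alpha(H')$ for free.

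It then remains to verify $|V(H')| \geq \frac{c_\ell p k^3 |S|}{4r}$. For this it suffices to show $T \leq \frac{c_\ell p k^3 |S|}{4r}$, since then $|V(H')| \geq |V(H)| - T \geq \frac{c_\ell p k^3 |S|}{2r} - \frac{c_\ell p k^3 |S|}{4r} = \frac{c_\ell p k^3 |S|}{4r}$. Plugging in the bound on $T$, the condition becomes
\[
\frac{4c_t p^3 |S|^3 k^9}{r^6} \leq \frac{c_\ell p k^3 |S|}{4r},
\]
which rearranges to $p \leq \frac{c_\ell^{1/2} r^{5/2}}{4c_t^{1/2} |S| k^3}$, precisely the upper bound on $p$ in \eqref{equ:1}. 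The only step that requires some care is checking that the specific constants in \eqref{equ:1} are chosen exactly so that at least half of the vertices of $H$ survive the triangle deletion; this is a routine calculation rather than a conceptual obstacle, as all the real work (triangle counting, a container-type bound on maximal independent sets, and concentration estimates) has already been absorbed into the earlier lemmas.
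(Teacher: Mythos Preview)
Your proof is correct and matches the paper's argument essentially line for line: apply Lemma~\ref{lemma:random}, take a realization $H$ satisfying all three events, delete one vertex per triangle, and verify that the upper bound on $p$ in \eqref{equ:1} is exactly the condition $T\le \tfrac{1}{2}|V(H)|$ (equivalently $T\le \frac{c_\ell p k^3|S|}{4r}$). The only cosmetic difference is that the paper phrases the last step as ``the upper bound on $p$ was chosen so that $|V(H)|\ge 2T$'' rather than writing out the rearrangement.
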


\begin{proof}
Let $H$ be defined as above. By Lemma \ref{lemma:random}, with probability at least $1/4$, $H$ satisfies the following conditions simultaneously: $|V(H)|\geq \frac{c_{\ell} p k^3|S|}{2r}$, $T\leq \frac{4c_tp^3|S|^3k^9}{r^6}$, and $\alpha(H)\leq \frac{2pk^3}{r}$. Fix some $H$ satisfying these conditions. Our upper bound on $p$ was chosen so that $|V(H)|\geq 2\cdot T$. Remove an arbitrary vertex from each triangle of $H$, and let $H'$ be the resulting graph. We have $|V(H')|\geq |V(H)|-T\geq \frac{1}{2}|V(H)|$, $H'$ is triangle-free, and $\alpha(H')\leq \alpha(H)$. Hence, $H'$ satisfies the desired conditions.
\end{proof}

Now we are ready to prove our main theorem. All that is left is to set the parameters $k,r$ appropriately and do a bit of calculation. 

\begin{proof}[Proof of Theorem \ref{thm:main}]
    In what comes, $c_0,c_1,c_2,c_3,c_4>0$ denote some unspecified constants, whose existence follows from simple calculations. Let $r:=c_0 k^{15/16}$, where $c_0$ is sufficiently large. Then $$|S|=c_s m^{3/2}=c_s \left(\frac{k}{r}\right)^{3/2}=\frac{c_s}{c_0^{3/2}}\cdot k^{3/32}.$$ Hence, the left-hand side of (\ref{equ:1}) is $$\frac{6c_{j}|S|^2}{r}=\frac{6c_j c_s^2 }{c_0^4}\cdot k^{-3/4},$$ while the right-hand side is
$$\frac{c_{\ell}^{1/2}r^{5/2}}{4c_t^{1/2}|S|k^3}=\frac{c_{\ell}^{1/2}c_0^{4}}{4c_t^{1/2} c_s}\cdot k^{-3/4}.$$ This shows that by choosing $c_0$ sufficiently large, we can ensure that the left-hand side is indeed smaller than the right-hand side. Also, we can choose $p=c_1 k^{-3/4}$ in between, that is, satisfying the inequalities in (\ref{equ:1}). By Lemma \ref{lemma:summary}, we get a triangle-free induced subgraph $H'$ of $G$ satisfying
$$|V(H')|\geq \frac{c_{\ell} pk^3|S|}{4r}=c_2 k^{45/32}$$ and 
$$\alpha(H')\leq \frac{2pk^3}{r}=c_3 k^{42/32}.$$ 
Finally, choose $k$ such that $n=c_2 k^{45/32}$ holds. Then the graph $H'$ satisfies $|V(H')|\geq n$ and $\alpha(H')\leq c_4 n^{14/15}$. We can remove further vertices of $H'$ arbitrarily to get a graph $H''$ with exactly $n$ vertices. The graph $H''$ is a triangle-free intersection graph of $n$ lines in $\mathbb{R}^3$ and  $$\chi(H'')\geq \frac{n}{\alpha(H'')}\geq  \frac{1}{c_4}n^{1/15},$$ finishing the proof. 
\end{proof}

\section{Zarankiewicz for Boxes --- Proof of Theorem \ref{thm:box}}\label{sect:Zar}

In this section, we prove Theorem \ref{thm:box} and Corollary \ref{cor:sepdim}. Unlike in the previous section, given real numbers $a,b$, $[a,b]$ denotes the closed real interval with endpoints $a$ and $b$. We fix some positive integer parameters first. Let $k$ be the solution of the equation  
$$n=(100k^{2d})^{k(d-1)}\cdot \binom{k+d-1}{d-1}$$ (where we omit the detail that $k$ might not be an integer). Then $k=\Theta_d(\frac{\log n}{\log\log n})$. Furthermore, define $s:=100k^{2d}$ and $m:=s^k$, and observe that $n=m^{d-1}\binom{k+d-1}{d-1}$. Also, $n$ being sufficiently large with respect to $d$ ensures that $s,k,m$ are also sufficiently large.

\medskip

First, we define our family of boxes $\mathcal{B}$. Given $\textbf{t}\in \mathbb{N}^d$ and $\textbf{p}\in \mathbb{Z}^d$, let $B^{(s)}_{\textbf{t}}(\textbf{p})=B_{\textbf{t}}(\textbf{p})$  denote the $s^{\textbf{t}(1)}\times\dots\times s^{\textbf{t}(d)}$ sized box  $$\prod_{i=1}^{d}\left[\ s^{\textbf{t}(i)}\mathbf{p}(i),\ s^{\textbf{t}(i)}\mathbf{p}(i)+s^{\textbf{t}(i)}\ \right).$$ Call $B_{\textbf{t}}(\textbf{p})$ a \emph{$\textbf{t}$-block}, or simply a \emph{block}. Clearly, the \emph{$\textbf{t}$-blocks} partition $\mathbb{R}^d$ for every $\textbf{t}\in \mathbb{N}^d$. Furthermore, for $\ell\in\mathbb{N}$, let $$T_{\ell}=\left\{\textbf{t}\in \mathbb{N}^d: \sum_{i=1}^d \mathbf{t}(i)=\ell\right\},$$
then $|T_{\ell}|=\binom{\ell+d-1}{d-1}$. Finally, let $\mathcal{B}$ be the family of all blocks of volume $m=s^{k}$ contained in $[0,m]^d$. Formally, $\mathcal{B}$ is the family of all blocks $B_{\textbf{t}}(\textbf{p})$, where $\textbf{t}\in T_k$ and $\textbf{p}(i)\in \{0,\dots,s^{k-\mathbf{t}(i)}-1\}$ for every $i\in [d]$. Note that  $|\mathcal{B}|=|T_k|\cdot m^{d-1}=n$ by the choice of our parameters. We remark that this family of boxes is also studied in a recent work of the author \cite{boxes22} in the context of piercing numbers. An important property of $\mathcal{B}$ is that for any distinct $B,B'\in \mathcal{B}$, the intersection $B\cap B'$ is either empty, or it is a $\mathbf{t}$-block for some $\mathbf{t}$ satisfying $\sum_{i=1}^{d}\mathbf{t}(i)\leq k-1$. To this end, define
$$\mathcal{B}^-=\{B\subset [0,m]^d:B\mbox{ is a }\mathbf{t}\mbox{-block for some }\mathbf{t}\in T_{k-1}\}.$$ 
Then for every distinct $B,B'\in\mathcal{B}$, $B\cap B'$ is empty or contained in an element of $\mathcal{B}^-$. Note that the volume of every element of $\mathcal{B}^-$ is $\frac{m}{s}$ and $|\mathcal{B}^-|=|T_{k-1}|sm^{d-1}$.

 \medskip

Now, we define the set of points $P$. In case $d=2$, there is a perfect set for our purposes (well known in discrepancy theory), the van der Corput set \cite{vdc}. Assuming $n=2^t$, and writing numbers in binary representation, the points of this set are $(0.x_1\dots x_t,0.x_t\dots x_1)$, where $(x_1,\dots,x_t)\in \{0,1\}^t$. The van der Corput set has the property that every rectangle of area $\frac{1}{n}$ contains at most one of its points. Scaling the unit square by a factor of $m$, we get a set of $n$ points $P$ in $[0,m]^2$ such that no rectangle of area $\frac{m^2}{n}$ contains two points of $P$. Here, by our choice of parameters, we have $\frac{m^2}{n}=\frac{m}{|T_k|}=\frac{m}{k+1}>\frac{m}{s}$. Let $G$ be the incidence graph of $(P,\mathcal{B})$. Then every point in $P$ is contained in exactly $|T_k|=k+1$ rectangles of $\mathcal{B}$, so $G$ has average degree $k+1=\Omega(\frac{\log n}{\log\log n})$. Also, the incidence graph of $G$ is $K_{2,2}$-free, as the intersection of any two rectangles in $\mathcal{B}$ has area at most $\frac{m}{s}$. This concludes the case $d=2$.

\medskip

Now we assume that $d\geq 3$. We show that instead of a well structured set such as the previously described van der Corput set, a random set of points also works (which works in the case $d=2$ as well).

\begin{lemma}\label{lemma:pointset}
There exists a set of $n$ points $P$ in $[0,m]^d$ such that every block $B\in\mathcal{B}^-$ contains at most one element of $P$. Moreover, the number of triples of points of $P$ that are contained in the same block $B\in \mathcal{B}$ is at most $32 |T_k|^3 n$.
\end{lemma}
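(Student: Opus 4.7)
The plan is to prove the lemma by the probabilistic method: sample $N:=2n$ points $p_1,\dots,p_N$ uniformly and independently in $[0,m]^d$, and then delete a small number of them to enforce both conditions.

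For the first moment bounds, a block $B\in\mathcal{B}^-$ has volume $s^{k-1}=m/s$, so the probability that a single sampled point lies in $B$ is $(sm^{d-1})^{-1}$. Summing $\binom{N}{2}$ times the square of this probability over the $|\mathcal{B}^-|=|T_{k-1}|sm^{d-1}$ many blocks and using $n=|T_k|m^{d-1}$, the expected number of unordered pairs lying in a common $\mathcal{B}^-$-block is at most
$$\binom{N}{2}\cdot\frac{|\mathcal{B}^-|}{(sm^{d-1})^{2}}\leq \frac{N^{2}|T_{k-1}|}{2sm^{d-1}}=\frac{2n\,|T_k||T_{k-1}|}{s},$$
which is much smaller than $n$ thanks to the generous choice $s=100k^{2d}$ together with $|T_k|,|T_{k-1}|\leq(2k)^{d-1}$. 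Analogously, each $B\in\mathcal{B}$ has volume $m$, so the expected number of unordered triples contained in a common $\mathcal{B}$-block is at most
$$\binom{N}{3}\cdot\frac{|\mathcal{B}|}{m^{3(d-1)}}\leq \frac{N^{3}|T_k|m^{d-1}}{6m^{3(d-1)}}=\frac{4|T_k|^{3}n}{3}.$$

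Then I would apply Markov's inequality to each of the two counts (say with a factor-$4$ slack, giving probability $\geq 3/4$ each) and take a union bound: with positive probability, the sampled set $P_0$ simultaneously has fewer than $n$ pairs in common $\mathcal{B}^-$-blocks and at most $\frac{16}{3}|T_k|^{3}n<32|T_k|^{3}n$ triples in common $\mathcal{B}$-blocks. Fixing such a $P_0$, I delete one point from each offending pair; fewer than $n$ points are removed, the surviving set contains at least $n$ points, and no two of them share a $\mathcal{B}^-$-block. Arbitrarily thinning down to exactly $n$ points yields the desired $P$, and since deletion only decreases the triple count, the final bound $32|T_k|^{3}n$ is preserved.

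The only subtle point is that the first cleanup step must not exhaust the sample, i.e., the expected number of bad pairs must be genuinely small compared to $n$. This is automatic from the fact that $s=100k^{2d}$ is polynomially much larger than $|T_k||T_{k-1}|=O(k^{2(d-1)})$, so there is no real obstacle — the argument is a standard combination of a Markov-based deletion method with elementary volume counting.
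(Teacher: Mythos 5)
Your proposal is correct and follows essentially the same route as the paper: sample $2n$ uniform points in $[0,m]^d$, bound the expected number of pairs sharing a $\mathcal{B}^-$-block and triples sharing a $\mathcal{B}$-block by first moments, apply Markov plus a union bound, and delete points to kill the bad pairs before thinning to exactly $n$ points. Your variant of removing one point per offending pair (rather than all points in bad pairs) is, if anything, a slightly cleaner way to justify that at least $n$ points survive.
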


\begin{proof}
First, let $Q$ be a set of $2n$ points chosen randomly and independently in $[0,m]^d$ from the uniform distribution. We show that after some cleaning, i.e. deleting some of the points, we get a $K_{2,2}$-free incidence graph with large average degree.

Say that a pair of distinct points $x,y\in Q$ is \emph{bad} if there exists a block $B\in\mathcal{B}^{-}$ with $x,y\in B$. Let $X$ be the number of bad pairs. For any pair $x,y$ of independent points from the uniform distribution on $[0,m]^d$, and $B\in\mathcal{B}^{-}$, we have $\mathbb{P}(x,y\in B)=\frac{1}{(sm^{d-1})^2}$. Hence, $$\mathbb{P}(\exists B\in\mathcal{B}^{-}:x,y\in B)\leq \frac{|\mathcal{B}^{-}|}{s^2m^{2(d-1)}}=\frac{|T_{k-1}|}{sm^{d-1}}.$$ But then,
 $$\mathbb{E}(X)\leq (2n)^2\cdot \frac{|T_{k-1}|}{sm^{d-1}}=\frac{4n|T_k|\cdot|T_{k-1}|}{s}< \frac{4nk^{2d}}{s}\leq \frac{n}{4},$$
By Markov's inequality, $\mathbb{P}(X\geq n)\leq \frac{1}{4}$.

 Also, let $T$ be the number of triples of points of $Q$ that are contained in the same block $B\in\mathcal{B}$. For any triple $x,y,z$ of independent points from the uniform distribution on $[0,m]^d$, we have $\mathbb{P}(x,y,z\in B)=\frac{1}{m^{3(d-1)}}$. Hence, $\mathbb{P}(\exists B\in\mathcal{B}:x,y,z\in B)\leq \frac{|\mathcal{B}|}{m^{3(d-1)}}=\frac{|T_k|}{m^{2(d-1)}}$. But then,
 $$\mathbb{E}(T)\leq (2n)^3\cdot \frac{|T_k|}{m^{2(d-1)}}=8|T_k|^3 n.$$
 By Markov's inequality, $\mathbb{P}(T\geq 32 |T_k|^3 n)\leq \frac{1}{4}$.
 
 Hence, there exists a set of points $Q$ satisfying $X\leq n$ and $T\leq 32 |T_k|^3 n$. Fix such a set. Let $P'$ be the set we get by removing all points of $Q$ that are contained in a bad pair. Then we removed at most $X$ points in total, so $|P'|\geq n$. Remove some further points of $P'$ arbitrarily to get a set $P$ with exactly $n$ elements. This concludes the construction of our point set.
 \end{proof}
 
We remark  that the condition on the number of triples in not needed for the proof of Theorem~\ref{thm:box}, but it  be useful later.
 
 Let $P$ be a set guaranteed by Lemma \ref{lemma:pointset}, and we analyze the incidence graph $G$ of $(P,\mathcal{B})$. Note that the degree of every element in $P$ is exactly 
 $$|T_k|=\binom{k+d-1}{d-1}\geq \frac{k^{d-1}}{(d-1)!}\geq c\left(\frac{\log n}{\log \log n}\right)^{d-1}$$
 where $c>0$ is some appropriate constant depending only on $d$. Hence, $G$ has average degree $|T_k|\geq c(\frac{\log n}{\log\log n})^{d-1}$.  Finally, $G$ is $K_{2,2}$-free, as no $\mathbf{t}$-block for $\mathbf{t}\in T_{k-1}$ contains more than one element of $P$. This finishes the proof of Theorem \ref{thm:box}.

\bigskip

We conclude this section with the proof of Corollary \ref{cor:sepdim}.

\begin{proof}[Proof of Corollary \ref{cor:sepdim}]
Let $P$ be a set of $n$ points and $\mathcal{B}$ be a set of $n$ boxes in $\mathbb{R}^d$ guaranteed by Theorem \ref{thm:box}.  Let $G$ be the incidence graph of $(P,\mathcal{B})$, and recall that $G$ is $K_{2,2}$-free. We show that $G$ has separation dimension at most $2d$, which then finishes the proof. Our task is to find an embedding $\phi:V(G)\rightarrow \mathbb{R}^{2d}$ such that if $p,p'\in P$ and $B,B'\in \mathcal{B}$ such that $p\in B$ and $p'\in B'$, then the box spanned by $\phi(p)$ and $\phi(B)$ is disjoint from the box spanned by $\phi(p')$ and $\phi(B')$. 

We define $\phi:V(G)\rightarrow \mathbb{R}^{2d}$ as follows. If $p\in P$, then $\phi(p)(i)=p(i)$ and $\phi(p)(i+d)=-p(i)$ for $i\in [d]$. Also, given a box $B\in \mathcal{B}$ such that $B=[a_1,b_1]\times \dots\times [a_d,b_d]$, let $\phi(B)(i)=b_i$ and $\phi(B)(i+d)=-a_i$ for $i\in [d]$. We show that $\phi$ suffices.

Define the partial ordering $\prec$ on $\mathbb{R}^{2d}$ such that $x\preceq y$ if $x(i)\leq y(i)$ for every $i\in [d]$. Observe that $p\in B$ for some $p\in P$ and $B\in\mathcal{B}$ if and only if $\phi(p)\preceq \phi(B)$. Also, the box spanned by $p$ and $B$ is exactly the set of points $y\in\mathbb{R}^{2d}$ such that $\phi(p)\preceq y\preceq \phi(B)$. Therefore, suppose that $p,p'\in P$ and $b,b'\in B$ are such that $p\in B$, $p'\in B'$, and the box spanned by $\phi(p)$ and $\phi(B)$ intersects the box spanned by $\phi(p')$ and $\phi(B')$ in some $y\in \mathbb{R}^{2d}$. Then $\phi(p),\phi(p')\prec y\prec \phi(B),\phi(B')$, which implies that $p'\in B$ and $p\in B'$ as well. But then $\{p,p',B,B'\}$ are the vertices of a copy of $K_{2,2}$ in $G$, contradiction.
\end{proof}

\section{Delaunay graphs --- Proof of Theorem \ref{thm:Delaunay}}\label{sect:Del}

In this section, we prove Theorem \ref{thm:Delaunay}. The Delaunay graph of a point set always refers to its Delaunay graph with respect to boxes. We give a brief outline of our strategy. 

\bigskip
\noindent
\textbf{Proof strategy.} Let $\mathcal{B}$  and $\mathcal{B}^-$ be the same systems of blocks as defined in the previous section (with the parameters $k,s,m$ slightly changed with respect to $n$). Given a set of points $P\subset [0,m]^d$, one can define the graph $G=G_P$ by connecting two points with an edge if they are contained in the same block $B\in \mathcal{B}$. Observe that if $B$ contains exactly two points $x,y\in P$, then the edge connecting $x$ and $y$ is also an edge of the Delaunay graph of $P$. Our aim is to find a set of $n$ points $P$ such that every block $B\in \mathcal{B}$ contains at most two points of $P$ and $G_P$ has small independence number. Then $G_P$ is a spanning subgraph of the Delaunay graph of $P$, which then also has small independence number.

In order to find $P$, we first consider a larger set $Q$ with the property that every $B\in\mathcal{B}^-$ contains at most one point of $Q$. Such a set is already constructed in the previous section. Our goal is to show that a random sample $P_0$ of $Q$ with some appropriate probability is close to our desired set $P$. In order to control the independence number of $G_{P_0}$, we  apply the graph container method. That is, we show that there is a small collection of small subsets of $Q$ such that every independent set of $G_Q$ is contained in one of these sets. The existence of such a collection follows from a supersaturation result. More precisely, the result we need and prove is that every subset of $Q$ of size $\lambda m^{d-1}$ ($\lambda\geq 2$) induces a subgraph in $G_Q$ of maximum degree at least $\frac{\lambda}{2} |T_k|$. The proof of this property uses the fact that every $B\in\mathcal{B}^-$ contains at most one point of $Q$.

\bigskip

We  now execute the above strategy formally. Let $N$ be a parameter specified later, and define $k,s,m$ as in the previous section, but now with respect to $N$ instead of $n$. That is $N=(100k^{2d})^{k(d-1)}\binom{k+d-1}{d-1}$, $s=16k^{2d}$ and $m=s^k$, and these parameters satisfy $k=\Theta_d((\frac{\log N}{\log\log N})^{d-1})$, $N=m^{d-1}\binom{k+d-1}{d-1}$. Also, define blocks, $\mathcal{B}$ and $\mathcal{B}^-$ in the same manner. Then by Lemma \ref{lemma:pointset}, there exists a set $Q\subset [0,m]^d$ of $N$ points such that no block in $\mathcal{B}^-$ contains more than one element of $Q$, and the number $T$ of triples of points contained in some block of $\mathcal{B}$ is at most $32 |T_k|^3 N$.

Define the graph $G$ on vertex set $Q$ such that two points are connected by an edge if they are contained in the same block $B\in\mathcal{B}$. Note that each edge of $G$ comes from a unique block $B$. Next, we  randomly sparsify the set $Q$ to get a set $P_0$ which is close to our desired set $P$. In order to control the independence number of a random induced subgraph of $G$, we  employ the celebrated graph container method \cite{KW82,S05}. We would like to show that there is small collection $\mathcal{C}$ of small subsets of $Q$, called \emph{containers}, such that every independent set of $G$ is contained in some element of $\mathcal{C}$. In order to show the existence of such a collection, one needs to ensure that large subsets of $Q$ induce subgraphs of $G$ of large maximum degree. We prove such a result in the next lemma. 

\begin{lemma}\label{lemma:saturation}
Let $\lambda\geq 2$ and $C\subset Q$ such that $|C|\geq \lambda \cdot m^{d-1}$. Then the maximum degree of $G[C]$ is at least $\frac{\lambda}{2} |T_k|$.
\end{lemma}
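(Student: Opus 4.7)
The plan is a short double-counting argument that combines the unique-block property of $G$ (recorded just before the lemma: each edge of $G$ lies in exactly one block $B\in\mathcal{B}$, because any two distinct members of $\mathcal{B}$ meet in an element of $\mathcal{B}^{-}$, which contains at most one point of $Q$) with convexity applied block-type by block-type.

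First I would unpack the block structure. For each $\mathbf{t}\in T_k$, the $\mathbf{t}$-blocks partition $[0,m]^d$; since each such block has volume $\prod_i s^{\mathbf{t}(i)}=s^k=m$, there are exactly $m^{d-1}$ of them inside $[0,m]^d$. Writing $\mathcal{B}_{\mathbf{t}}$ for the set of $\mathbf{t}$-blocks of $\mathcal{B}$, we have $|\mathcal{B}_{\mathbf{t}}|=m^{d-1}$, and $\mathcal{B}$ is the disjoint union of the $\mathcal{B}_{\mathbf{t}}$ for $\mathbf{t}\in T_k$.

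Next I would count edges in $G[C]$. Because each edge of $G$ arises from a unique block, the unique-block property gives
\[
|E(G[C])|\;=\;\sum_{\mathbf{t}\in T_k}\sum_{B\in\mathcal{B}_{\mathbf{t}}}\binom{|B\cap C|}{2}.
\]
For each fixed $\mathbf{t}$, the $\mathbf{t}$-blocks of $\mathcal{B}$ partition $[0,m]^d\supset C$, so $\sum_{B\in\mathcal{B}_{\mathbf{t}}}|B\cap C|=|C|\geq \lambda\, m^{d-1}$. Convexity of $\binom{x}{2}$ (Jensen's inequality applied across the $m^{d-1}$ blocks) then gives
\[
\sum_{B\in\mathcal{B}_{\mathbf{t}}}\binom{|B\cap C|}{2}\;\geq\; m^{d-1}\binom{|C|/m^{d-1}}{2}.
\]
Summing over the $|T_k|$ choices of $\mathbf{t}$ yields $|E(G[C])|\geq |T_k|\,m^{d-1}\binom{|C|/m^{d-1}}{2}$.

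Finally I would convert this edge bound into a maximum-degree bound via the elementary inequality $\Delta(G[C])\geq 2|E(G[C])|/|C|$. Setting $\mu:=|C|/m^{d-1}\geq\lambda\geq 2$, this gives
\[
\Delta(G[C])\;\geq\;\frac{2|T_k|\,m^{d-1}\binom{\mu}{2}}{|C|}\;=\;|T_k|(\mu-1)\;\geq\; |T_k|(\lambda-1)\;\geq\;\frac{\lambda}{2}\,|T_k|,
\]
where the last step uses $\lambda\geq 2$. There is no real obstacle here: the only subtlety is remembering that the unique-block property is what licenses summing $\binom{|B\cap C|}{2}$ over all $B\in\mathcal{B}$ without any overcounting of edges, so that convexity can be applied one block-type at a time.
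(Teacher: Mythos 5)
Your proof is correct and follows essentially the same route as the paper: count edges of $G[C]$ block by block using the unique-block property, apply convexity (your per-type Jensen is equivalent to the paper's single Cauchy--Schwarz over all of $\mathcal{B}$, and yields the identical bound $e(G[C])\geq\frac{|T_k||C|}{2}\bigl(\frac{|C|}{m^{d-1}}-1\bigr)$), and then pass from edge count to maximum degree via the average degree, using $\lambda-1\geq\lambda/2$.
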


\begin{proof}
Let $B\in \mathcal{B}$ and let $n_B=|B\cap C|$. As $G[B\cap C]$ is a complete graph,  $$e(G[B\cap C])=\frac{n_B^2-n_B}{2}.$$ Therefore,
$$e(G[C])=\sum_{B\in\mathcal{B}}e(G[B\cap C])=\sum_{B\in\mathcal{B}}\left(\frac{n_B^2}{2}-\frac{n_B}{2}\right)\geq \frac{(\sum_{B\in\mathcal{B}} n_B)^2}{2|\mathcal{B}|}-\frac{\sum_{B\in\mathcal{B}} n_B}{2}.$$
Here, the first equality holds by the fact that every edge of $G$ is contained in exactly one of the blocks, and the last inequality is due to the Cauchy-Schwartz inequality. But note that $\sum_{B\in\mathcal{B}} n_B=|T_k|\cdot |C|$, as each element of $Q$ is contained in exactly $|T_k|$ blocks of $\mathcal{B}$. Therefore, we can rewrite the right hand side as
$$e(G[C])\geq  \frac{|T_k|^2 |C|^2}{2|T_k|\cdot m^{d-1}}-\frac{|T_k| |C|}{2}=\frac{\lambda-1}{2}\cdot |T_k|\cdot |C|\geq \frac{\lambda}{4}\cdot |T_k|\cdot |C|.$$
This shows that the average degree of $G[C]$ is at least $\frac{\lambda}{2} |T_k|$, finishing the proof.
\end{proof}

Now we are ready to state and prove our container lemma. The proof should be mostly standard to anyone familiar with the container method.

\begin{lemma}\label{lemma:container}
There exists a collection $\mathcal{C}\subset 2^Q$ with the following properties.
\begin{description}
    \item[(i)] Every independent set of $G$ is contained in some $C\in\mathcal{C}$.
    \item[(ii)] $|C|\leq 3m^{d-1}$ for every $C\in\mathcal{C}$.
    \item[(iii)] $|\mathcal{C}|\leq \exp\left(\frac{cm^{d-1}}{|T_k|}(\log\log N)^2\right)$ for some $c_1=c_1(d)$.
\end{description}
\end{lemma}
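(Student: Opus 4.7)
The strategy is to apply the standard graph container method of Kleitman--Winston/Sapozhenko, using Lemma \ref{lemma:saturation} as the supersaturation input. Fix once and for all an arbitrary total ordering of $Q$. Given an independent set $I$ of $G$, I will deterministically construct a \emph{fingerprint} $B = B(I) \subset I$ and a \emph{container} $C = C(I) \supset I$, where $C$ is completely determined by $B$; then $\mathcal{C} := \{C(I) : I \text{ independent in } G\}$ satisfies (i), and $|\mathcal{C}|$ is bounded by the number of possible fingerprints.

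The algorithm maintains sets $A_j \supset I \setminus B_j$ and $B_j \subset I$, initialized as $A_0 := Q$, $B_0 := \emptyset$. At step $j$, if $|A_j| \leq 2m^{d-1}$, halt and output $B := B_j$, $C := A_j \cup B_j$. Otherwise, let $v_j$ be the maximum-degree vertex of $G[A_j]$, with ties broken by the fixed ordering; by Lemma \ref{lemma:saturation} applied to $A_j$ (with $\lambda_j := |A_j|/m^{d-1} \geq 2$), we have $\deg_{G[A_j]}(v_j) \geq (\lambda_j/2)|T_k|$. If $v_j \in I$, put $B_{j+1} := B_j \cup \{v_j\}$ and $A_{j+1} := A_j \setminus (\{v_j\} \cup N_{G[A_j]}(v_j))$; independence of $I$ ensures $I \cap N(v_j) = \emptyset$, so $I \subset A_{j+1} \cup B_{j+1}$ is preserved. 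If $v_j \notin I$, simply set $A_{j+1} := A_j \setminus \{v_j\}$ and $B_{j+1} := B_j$. Crucially, given only $B$ and the ordering, one can replay the algorithm (checking at each step whether the chosen $v_j$ belongs to $B$ to decide which branch to take) and hence reconstruct $C$, so $C$ is determined by $B$.

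To bound $|B|$, partition the run into dyadic phases: phase $t$ consists of the steps with $|A_j| \in [2^t m^{d-1}, 2^{t+1} m^{d-1})$, for $t = 1, 2, \dots, \lceil \log_2 |T_k| \rceil$ (since $|A_0| = N = |T_k|m^{d-1}$). During phase $t$, each \emph{good} step (one with $v_j \in I$) removes at least $2^{t-1}|T_k|$ vertices from $A_j$, while the phase ends once $|A|$ drops by $2^t m^{d-1}$; hence phase $t$ contributes at most $2m^{d-1}/|T_k|$ good steps. Summing,
\[
|B| \;\leq\; 2m^{d-1}\log_2|T_k|\,/\,|T_k| \;\leq\; m^{d-1},
\]
for $|T_k|$ large enough, so $|C| \leq 2m^{d-1} + |B| \leq 3m^{d-1}$, yielding (ii). Since $B \subset Q$ with $|B| \leq q := 2m^{d-1}\log_2|T_k|/|T_k|$, the number of possible fingerprints is at most
\[
\sum_{i=0}^{q} \binom{N}{i} \;\leq\; \left(\frac{eN}{q}\right)^{q}.
\]
Now $N/q = |T_k|^2/(2\log_2|T_k|)$, and $|T_k| = \binom{k+d-1}{d-1} = \Theta_d(k^{d-1})$ with $k = \Theta_d(\log N/\log\log N)$, so $\log(eN/q) = O_d(\log|T_k|) = O_d(\log\log N)$. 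Therefore
\[
\log|\mathcal{C}| \;\leq\; q \cdot \log(eN/q) \;=\; O_d\!\left(\frac{m^{d-1}(\log\log N)^2}{|T_k|}\right),
\]
which gives (iii) with an appropriate $c_1 = c_1(d)$.

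The main technical point is the dyadic accounting of good steps: because the max-degree lower bound scales linearly with $|A|/m^{d-1}$, naively summing gives $|B| = O(m^{d-1})$ which is too weak, and it is the phase-by-phase argument that shaves a factor of $\log|T_k|/|T_k|$ and ultimately converts one factor of $\log N$ into $\log\log N$ in the exponent. Everything else is a routine binomial coefficient estimate.
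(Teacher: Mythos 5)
Your proof is correct and follows essentially the same route as the paper: the same max-degree container algorithm with a fingerprint $S\subset I$, driven by the supersaturation Lemma \ref{lemma:saturation}, with the container reconstructed by replaying the algorithm from the fingerprint and the count bounded by counting small subsets of $Q$. The only difference is bookkeeping: the paper tracks the multiplicative shrinkage $|V(G_{i+1})|\leq\left(1-\frac{|T_k|}{2m^{d-1}}\right)|V(G_i)|$ at each good step, while you use dyadic phases; both give a fingerprint of size $O\!\left(\frac{m^{d-1}\log\log N}{|T_k|}\right)$ and the same final bounds.
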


\begin{proof}
Let $<$ be an arbitrary total ordering of the elements of $Q$. For a subgraph $H$ of $G$ and vertex $v\in V(H)$, $N_{H}(v)=\{w\in V(H):vw\in E(H)\}$ denotes the \emph{neighborhood} of $v$ in $H$.

Fix an independent set $I$ of $G$. We construct a \emph{fingerprint} $S$ and a set $f(S)$ (depending only on $S$) for $I$ with the help of the following algorithm.

Let $S_0=\emptyset$ and $G_0=G$. If $S_i$ and $G_i$ are already defined, we define $S_{i+1}$ and $G_{i+1}$ in the following manner. Let $v\in R$ be the first vertex (with respect to $<$) of maximum degree in $G_{i}$.  
\begin{itemize}
\item If $|V(G_{i})|\leq  2m^{d-1}$, then stop, and set $S:=S_{i}$ and $f(S):=V(G_{i})$.
\item  Otherwise, if $v\not\in I$, then set $S_{i+1}=S_i$, remove $v$ from $G_i$, and let the resulting graph be $G_{i+1}$. 
\item If $v\in I$, then set $S_{i+1}=S_i\cup\{v\}$, and remove $\{v\}\cup N_{G_i}(v)$ from $G_i$, let $G_{i+1}$ be the resulting graph.
\end{itemize}

We analyze this algorithm. At each step, the size of $G_i$ decreases, so the algorithm stops after a finite number of steps. The first observation one has to make is that $f(S)$ indeed only depends on $S$. We omit the details, as this argument is standard (one has to check that at every step, $G_i$ only depends on $S_i$). Secondly, as $I$ is an independent set, we have $I\subset S_i\cup V(G_i)$ for every $i$, so in particular $I\subset S\cup f(S)$. 

Finally, we have $|S|\leq \frac{c_0 m^{d-1}}{|T_k|}\log\log N$ for some $c_0=c_0(d)$. Indeed, in case we added a vertex $v$ to $S_i$ to get $S_{i+1}$, we removed at least $1+|N_{G_i}(v)|$ vertices from $G_i$ to get $G_{i+1}$. But $v$ is a vertex of maximum degree in $G_i$, and $|V(G_i)|\geq 2m^{d-1}$,  so we can apply Lemma \ref{lemma:saturation} with $\lambda=\frac{|V(G_i)|}{m^{d-1}}$ to get $$1+|N_{G_i}(v)|> \frac{|V(G_i)| |T_k|}{2m^{d-1}}.$$ Therefore, $$|V(G_{i+1})|\leq \left(1-\frac{|T_k|}{2m^{d-1}}\right)|V(G_i)|<\exp\left(-\frac{|T_k|}{2m^{d-1}}\right)|V(G_i)|.$$
From this, we deduce that $\exp\left(-\frac{|S|\cdot |T_k|}{2m^{d-1}}\right)\cdot |Q|\geq 2m^{d-1}$. Solving the inequality gives $$|S|\leq \frac{2m^{d-1}}{|T_k|}\cdot \log \frac{|Q|}{2 m^{d-1}}\leq \frac{c_0 m^{d-1}}{|T_k|}\log\log N,$$
where $c_0$ is some constant depending only on $d$. In particular, we have $|S\cup f(S)|\leq 3m^{d-1}$.

Let $\mathcal{C}$ be the collection of all the sets $S\cup f(S)$, where $S$ is the fingerprint of some independent set $I$. Then (i) and (ii) are satisfied. Also, as each fingerprint has size at most $z:=\frac{c_0 m^{d-1}}{|T_k|}\log\log N$, we can bound the size of $\mathcal{C}$ by simply counting all subsets of $V(G)$ of size at most $z$. Therefore, 
$$|\mathcal{C}|\leq \sum_{i=0}^{z}\binom{|Q|}{i}\leq \left(\frac{4N}{z}\right)^{z}\leq \exp\left(\frac{c_1m^{d-1}}{|T_k|}(\log\log N)^2\right)$$
with some appropriate $c_1=c_1(d)$, where the last inequality holds by observing that $\frac{N}{z}$ is polylogarithmic in $N$. 
\end{proof}

Let $n=\frac{N}{400|T_k|^{3/2}}=\frac{m^{d-1}}{400|T_k|^{1/2}}$ (or rather, fix the parameter $k$ with respect to $n$ such that this equality is satisfied), and let $p:=\frac{4n}{N}=\frac{1}{100|T_k|^{3/2}}$. We highlight that $\log n=(1+o(1))\log N$, which is used subtly in calculations later. Let $P_0$ be the random sample  we get by selecting each element of $Q$ independently with probability $p$. Then $\mathbb{E}(|P_0|)=4n$, so by the multiplicative Chernoff bound, we have $\mathbb{P}(|P_0|\geq 2n)\geq 3/4$.

Let $T'$ be the number of triples in $P_0$ that are contained in the same block $B\in\mathcal{B}$. Then $$\mathbb{E}(T')=p^3 T \leq 32\left(\frac{3n}{N}\right)^3 |T_k|^3 N \leq \frac{n}{4}.$$ Hence, by Markov's inequality, we have $\mathbb{P}(T\geq n)\leq \frac{1}{4}$.

Finally, let $H_0=G[P_0]$ and $\alpha=\frac{10c_1m^{d-1}}{|T_k|}(\log\log N)^2$, where $c$ is the constant given by Lemma~\ref{lemma:container}.

\begin{lemma}
$\alpha(H_0)\leq \alpha$ with probability at least $\frac{3}{4}$.
\end{lemma}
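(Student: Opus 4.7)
The plan is a textbook application of the container method. I will first observe that any independent set $I$ of $H_0$ is also an independent set of $G$ (since $H_0 = G[P_0]$), so part (i) of Lemma \ref{lemma:container} provides a container $C \in \mathcal{C}$ with $I \subseteq C$, and since $I \subseteq P_0$ this actually gives $I \subseteq C \cap P_0$. Consequently $\alpha(H_0) \leq \max_{C \in \mathcal{C}} |C \cap P_0|$, and it is enough to prove that $|C \cap P_0| < \alpha$ for every $C \in \mathcal{C}$ with probability at least $3/4$.

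For each fixed $C$, I would next note that $|C \cap P_0|$ is a sum of at most $3m^{d-1}$ independent Bernoulli($p$) random variables (using part (ii) of Lemma \ref{lemma:container}), so
\[
\mathbb{E}\bigl(|C \cap P_0|\bigr) \leq 3pm^{d-1} = \frac{3m^{d-1}}{100|T_k|^{3/2}}.
\]
Comparing with $\alpha = \frac{10c_1 m^{d-1} (\log\log N)^2}{|T_k|}$, the ratio $\alpha / \mathbb{E}(|C \cap P_0|)$ is of order $|T_k|^{1/2}(\log\log N)^2$, which certainly exceeds $2$ for $N$ large. Hence the multiplicative Chernoff bound (Lemma \ref{lemma:chernoff}) yields $\mathbb{P}(|C \cap P_0| \geq \alpha) \leq e^{-\alpha/6}$.

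Finally I would take a union bound over $\mathcal{C}$ using the size estimate in part (iii) of Lemma \ref{lemma:container}:
\[
\mathbb{P}\Bigl(\max_{C \in \mathcal{C}} |C \cap P_0| \geq \alpha\Bigr) \leq |\mathcal{C}| \cdot e^{-\alpha/6} \leq \exp\!\left(\frac{c_1 m^{d-1}(\log\log N)^2}{|T_k|} - \frac{10 c_1 m^{d-1}(\log\log N)^2}{6|T_k|}\right) = \exp\!\left(-\frac{2 c_1 m^{d-1}(\log\log N)^2}{3|T_k|}\right),
\]
which tends to $0$ with $N$ and is well below $1/4$ for $N$ sufficiently large.

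I do not anticipate any real obstacle: the entire argument is routine once the container lemma is in hand. The only bit of calibration that has to be checked is the factor $10$ baked into the definition of $\alpha$, which is chosen precisely so that $\alpha/6$ comfortably dominates the exponent $c_1 m^{d-1}(\log\log N)^2/|T_k|$ coming from $|\mathcal{C}|$, while simultaneously leaving $\alpha$ large enough (by a factor of order $|T_k|^{1/2}(\log\log N)^2$) to sit well above twice the expectation, so that Lemma \ref{lemma:chernoff} applies in its stated form.
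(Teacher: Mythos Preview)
Your proposal is correct and follows essentially the same approach as the paper's proof: bound $\alpha(H_0)$ by $\max_{C\in\mathcal{C}}|C\cap P_0|$, apply the multiplicative Chernoff bound to each $|C\cap P_0|$ using $\mathbb{E}(|C\cap P_0|)\leq 3pm^{d-1}<\alpha/2$, and then take a union bound over $\mathcal{C}$ via the size estimate from Lemma~\ref{lemma:container}. Your write-up is in fact slightly more explicit than the paper's in checking the numerical inequalities, but there is no substantive difference.
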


\begin{proof}
Let $\mathcal{C}$ be the collection given by Lemma \ref{lemma:container}. Then every $C\in\mathcal{C}$ satisfies $|C|\leq 3m^{d-1}$. In particular, $$\mathbb{E}(|C\cap P_0|)\leq 3p m^{d-1}<\frac{\alpha}{2}.$$ But then by the multiplicative Chernoff bound, 
$$\mathbb{P}(|C\cap P_0|\geq \alpha)\leq e^{-\alpha/6}.$$
Furthermore, by the union bound,
$$\mathbb{P}(\exists C\in\mathcal{C}:|C\cap P_0|\geq \alpha)\leq |\mathcal{C}|e^{-\alpha/6}<\frac{1}{4}.$$
Therefore, with probability at least $\frac{3}{4}$, we have $|C\cap P_0|\leq \alpha$ for every $C\in\mathcal{C}$. But every independent set of $G$ is contained in some element of $\mathcal{C}$, so $\alpha(H_0)\leq \alpha$ with probability at least~$\frac{3}{4}$.
\end{proof}

Hence, with positive probability, there exists $P_0$ such that $|P_0|\geq 2n$, $T'\leq n$ and $\alpha(H_0)\leq \alpha$. Fix such a set $P_0$. Let $P_1$ be the set of points we get by removing a member of every triple of $P_0$ that is contained a block of $\mathcal{B}$. Then $|P_1|\geq |P_0|-T'\geq n$, so we can remove some further points arbitrarily to get a set $P\subset P_1$ with exactly $n$ elements. Define $H=H_0[P]=G[P]$. Then 
$$\alpha(H)\leq \alpha(H_0)\leq \alpha=\frac{10c_1m^{d-1}}{|T_k|}(\log\log N)^2=\frac{4000c_1 n}{|T_k|^{1/2}}(\log\log N)^2\leq \frac{c n (\log \log n)^{(d+3)/2}}{(\log n)^{(d-1)/2}},$$ for some constant $c$ depending only on $d$.  Furthermore, every block of $B\in\mathcal{B}$ contains at most two points of $P$. The latter ensures that $H$ is a spanning subgraph of the Delaunay graph $D$ of $P$, so $\alpha(D)\leq \alpha(H)$. This finishes the proof of Theorem \ref{thm:Delaunay}.

\section{Concluding remarks}

\subsection{Coloring Lines}

As mentioned in the introduction, Davies \cite{D21} proved that for every pair of positive integers $g$ and $\chi$ there is an intersection graph of lines with girth $g$ and chromatic number $\chi$. We believe that for any fixed girth $g$, the chromatic number grows polynomially as a function of the number of lines.

\begin{conjecture}
    For every $g\in\mathbb{N}$ there exists $\varepsilon>0$ such that the following holds. For every sufficiently large $n$, there exists an intersection graph of $n$ lines in $\mathbb{R}^3$ of girth at least $g$ and chromatic number at least $n^{\varepsilon}$.
\end{conjecture}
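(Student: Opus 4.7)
The plan is to extend the proof of Theorem~\ref{thm:main} from triangles to cycles of every length up to $g-1$. The framework of Section~\ref{sect:lines} is modular: the line family $\mathcal{L}$ and the container-style independent-set bound of Lemma~\ref{lemma:independent} use nothing about triangle-freeness, and once short cycles are sufficiently few they can be removed by a single vertex deletion per cycle after random sampling. The genuinely new ingredient is a generalization of Lemma~\ref{lemma:triangles} bounding the number $C_\ell$ of $\ell$-cycles in the intersection graph $G$ of $\mathcal{L}$, for each $3 \leq \ell \leq g-1$.

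The plane-based argument of Lemma~\ref{lemma:triangles} should generalize as follows. An $\ell$-cycle $L_1, \ldots, L_\ell$ with direction sequence $\mathbf{v}_1, \ldots, \mathbf{v}_\ell \in S$ determines planes $H_{i,i+1} \in \mathcal{H}_{\mathbf{v}_i, \mathbf{v}_{i+1}}$ spanned by consecutive pairs, and each $L_i = H_{i-1,i} \cap H_{i,i+1}$ whenever the two planes are distinct. Since any three vectors of $S$ are linearly independent, no plane contains three distinct $S$-directions, so this distinctness is automatic whenever $\mathbf{v}_{i-1}, \mathbf{v}_i, \mathbf{v}_{i+1}$ are pairwise distinct for every $i$. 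Summing over such generic direction sequences and invoking Lemma~\ref{lemma:size_of_H} yields
\[
    C_\ell \;\lesssim\; |S|^{\ell}\left(\frac{c_h k^3}{r^2}\right)^{\ell}.
\]
Cycles in which $\mathbf{v}_{i-1} = \mathbf{v}_{i+1}$ for some $i$ need separate treatment; I would count them by iterating the crude estimate that each line of $\mathcal{L}$ meets $O(k|S|)$ other lines of $\mathcal{L}$, noting that degenerate direction patterns occupy only an $O_\ell(|S|^{-1})$ fraction of all sequences.

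With this count in hand, the argument parallels Lemma~\ref{lemma:summary} but with one extra constraint per cycle length. Sample $\mathcal{L}$ with probability $p$ satisfying both $p \geq 6c_j|S|^2/r$ (for the independent-set bound) and $p^\ell C_\ell \leq \tfrac{1}{2g}\,p|\mathcal{L}|$ for every $3 \leq \ell < g$, so that deleting one vertex per surviving short cycle preserves most of the sample. The binding upper bound on $p$ comes from $\ell = g-1$; plugging in $|\mathcal{L}| \sim k^{9/2}/r^{5/2}$ and $C_\ell \sim k^{9\ell/2}/r^{7\ell/2}$, compatibility of the two bounds on $p$ forces $r \sim k^{15(g-2)/(15g-28)}$, which yields $|S| \sim (k/r)^{3/2} \sim k^{3/(15g-28)}$. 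The independent-set estimate $\alpha \lesssim pk^3/r$ then gives $\chi \geq |V(H)|/\alpha \gtrsim |S|$; writing the final vertex count as $n$ and doing the bookkeeping produces $\chi \gtrsim n^{1/(5(g-1))}$, so $\varepsilon(g) = 1/(5(g-1))$ should suffice, and this correctly specializes to $1/15$ at $g=4$.

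The main obstacle, I expect, is a clean treatment of degenerate cycles, since the plane-to-line recovery fails precisely when three consecutive directions coincide non-trivially. A structural workaround is to strengthen Lemma~\ref{lemma:independent_vectors} so that $S$ is in general position up to size $g$ (for example, no three vectors lying on a common plane spanned by two others), at a constant-factor cost in $|S|$; as remarked after that lemma, a random polynomial-size subset of $[\varepsilon N, N]^3$ already satisfies much stronger genericity with high probability, so this should not meaningfully affect $\varepsilon(g)$. A secondary technical concern is whether the constant $c_h$ of Lemma~\ref{lemma:size_of_H} is pessimistic for many pairs $(\mathbf{u}, \mathbf{v})$; refining it would improve the exponent.
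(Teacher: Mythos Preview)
This statement is listed in the paper as an open \emph{conjecture}; the paper does not prove it, so there is no paper-proof to compare against. Your approach is the natural extension of Section~\ref{sect:lines}, and your parameter calculation for generic cycles is correct (it specializes to $1/15$ at $g=4$), but the degenerate case is a genuine obstruction that your proposed fixes do not address.

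The problematic cycles are not those in which three \emph{distinct} directions of $S$ become coplanar---Lemma~\ref{lemma:independent_vectors} already excludes that, so strengthening general position of $S$ is vacuous here---but cycles whose direction sequence \emph{repeats}. The minimal instance is a $4$-cycle with pattern $(\mathbf u,\mathbf v,\mathbf u,\mathbf v)$: all four lines then lie in a single plane $H\in\mathcal H_{\mathbf u,\mathbf v}$, the plane sequence collapses to one plane, and the plane-to-line encoding is lost. A typical such plane contains $\Theta(r)$ lines of each type (every $\mathbf v$-type line of $\mathcal L$ in $H$ meets a fixed $\mathbf u$-type line $L\subset H$ at one of the $O(r)$ lattice points of $L$ inside an $O(k)$-box, and for planes through the bulk of $[k]^3$ there are $\Omega(r)$ such lines), so each plane carries $\Theta(r^4)$ in-plane $4$-cycles, and altogether
\[
\#\{\text{in-plane }C_4\}\;=\;\Theta\bigl(|\mathcal H|\cdot r^4\bigr)\;=\;\Theta\bigl(|S|^2k^3r^2\bigr).
\]
After sampling, the deletion step requires $p^4|S|^2k^3r^2\ll p|\mathcal L|=\Theta(p|S|k^3/r)$, i.e.\ $p\ll (|S|r^3)^{-1/3}$; combined with the container lower bound $p\gtrsim|S|^2/r$ from Lemma~\ref{lemma:independent} this forces $|S|^{7/3}\ll 1$, so $|S|=O(1)$ and the argument yields only $\chi=O(1)$. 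Your crude degree bound runs into the same wall---the degree is $\Theta(r|S|)$, and paths that stay inside one plane dominate the count---while the ``$O_\ell(|S|^{-1})$ fraction'' remark is misleading, since degenerate direction patterns carry \emph{more} cycles per pattern, not fewer. Handling these in-plane cycles seems to require a genuinely new idea, which is presumably why the author states this as a conjecture.
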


Furthermore, it would be interesting to see whether Theorem \ref{thm:main} can be extended to the projective space $\mathbb{PR}^3$. The construction of Norin (see \cite{D21}) shows that there are triangle-free intersection graphs of lines in $\mathbb{PR}^3$ of arbitrarily large chromatic number. However, in the proof of Theorem \ref{thm:main}, and in the aforementioned construction of Davies as well, it is crucial to use large sets of parallel lines.

\begin{conjecture}
    There exists $\varepsilon>0$ such that for every sufficiently large $n$, there exists a triangle-free intersection graph of $n$ lines in $\mathbb{PR}^3$ of chromatic number at least $n^{\varepsilon}$.
\end{conjecture}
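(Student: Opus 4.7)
The starting point of any attempt on this conjecture must be the construction of Theorem~\ref{thm:main}, which we would hope to transplant to $\mathbb{PR}^3$. Its essential architecture is a partition of $n$ lines into $|S|=\Theta(m^{3/2})$ pencils of $\sim r$ \emph{parallel} lines per direction $\mb{v}\in S$, followed by random sparsification and triangle removal. The parallelism is precisely what makes each pencil an \emph{independent} set of size $r$ in $\mathbb{R}^3$; intersections occur only between lines of different types and are controlled algebraically by Lemma~\ref{lemma:size_of_H}.

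The first step of the plan is to isolate the obstruction. In $\mathbb{PR}^3$ all $\mb{v}$-type lines pass through the common point $[\mb{v}]\in \mathbb{PR}^2_\infty$, so every pencil of $r=\Theta(k^{15/16})$ lines immediately becomes an $r$-clique, obliterating the triangle-free property. In particular, the same sparsification density $p$ used in the $\mathbb{R}^3$ proof would leave $p r$-cliques at every point at infinity. Hence any adaptation must destroy the pencil structure while retaining enough inter-class intersections to keep the chromatic number polynomial.

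The natural first attempt is a perturbation argument: for each $\mb{v}\in S$, replace the $r$ parallel $\mb{v}$-type lines by lines with pairwise distinct nearby directions so that they are pairwise skew in $\mathbb{PR}^3$. This breaks concurrency at infinity, but it also destroys the useful intersections, since transverse incidence of two lines in $\mathbb{R}^3$ is a codimension-one condition — a generic perturbation of a single line eliminates essentially all of its intersections with lines of other types. Making the perturbations correlated across pencils so that inter-type intersection counts are preserved is, in my view, the main obstacle, and it seems to require a genuinely algebraic input that the lattice-based proof of Lemma~\ref{lemma:size_of_L} does not provide.

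A more promising route, which I would pursue as the primary plan, is to abandon pencils altogether and parameterize lines in $\mathbb{PR}^3$ by their intersections with two disjoint fixed planes $\pi_1,\pi_2\subset \mathbb{PR}^3$: a generic line corresponds to a point of $\pi_1\times\pi_2\cong(\mathbb{R}^2)^2$, and the intersection of two such lines becomes a single quadratic condition on four coordinates. I would then search for a discrete family $\mathcal{L}\subset \pi_1\times\pi_2$ of size $n$ satisfying analogues of Lemmas~\ref{lemma:size_of_H} and~\ref{lemma:triangles}: namely, no point of $\mathbb{PR}^3$ lies on more than a controlled number of lines of $\mathcal{L}$, and the number of triangles (equivalently, triples of pairwise coplanar lines) is at most a controlled power of $|\mathcal{L}|$. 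Given these two estimates, the random sparsification together with the Pach--Tardos container bound of Lemma~\ref{lemma:independent} — which is purely combinatorial — should go through verbatim and yield a triangle-free induced subgraph of polynomial chromatic number. The genuine mathematical obstacle, and where the proof will stand or fall, is to establish an algebraic incidence estimate on $\pi_1\times\pi_2$ that plays the structural role which the $3$-independence of $S$ played in $\mathbb{R}^3$.
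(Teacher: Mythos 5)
The statement you are trying to prove is not a theorem of the paper at all: it is stated there as an open conjecture, precisely because the construction of Theorem~\ref{thm:main} (and Davies's construction as well) relies crucially on large families of parallel lines, which become concurrent pencils --- hence large cliques --- in $\mathbb{PR}^3$. You identify this obstruction correctly, but your proposal does not close it: it is a research plan whose decisive steps are left as hopes. Concretely, you never exhibit a candidate family $\mathcal{L}\subset\pi_1\times\pi_2$, and you give no argument for the two estimates you yourself flag as essential (a bound on the number of lines through a common point, and a bound on the number of pairwise-coplanar, pairwise-intersecting triples). Without those, nothing in the sparsification machinery can even be set up.

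There is also a specific error in the transfer you describe as automatic. Lemma~\ref{lemma:independent} is \emph{not} purely combinatorial: its proof needs the geometric fact that the edge set of $G$ partitions into at most $|\mathcal{H}|=O(|S|^2k^3/r^2)$ \emph{complete bipartite} graphs $B_H$, one for each plane $H$, and this in turn uses that every plane contains lines of exactly two of the parallel classes (a consequence of the $3$-wise linear independence of $S$) together with the lower bound $|H\cap[k]^3|=\Omega(r^2)$ from Lemma~\ref{lemma:size_of_H}. In a projective construction with no parallel classes, a single plane may contain lines of many ``types'' whose intersection pattern inside the plane is not complete bipartite, and you would have no a priori bound on the number of relevant planes; the count $|\mathcal{J}|\leq 2^{|\mathcal{H}|}$, and with it the union bound in Lemma~\ref{lemma:random}(3), collapses. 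So the genuine gap is exactly the one the paper points to: you need a new algebraic/incidence input that simultaneously bounds concurrences, coplanar triangles, \emph{and} gives a small edge decomposition (or some other container-type supersaturation) replacing the role of parallelism. Until such an input is produced, the conjecture remains open and your proposal is not a proof.
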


We remark that if we work in the finite projective space $\mathbb{PF}_p^3$, then there is a family of $n=n(p,g)$ lines, whose intersection graph has girth more than $g$ and chromatic number at least $n^{1/g-o(1)}$ (unpublished). This shows that if the previous two conjectures fail, then the reason must be geometric rather than algebraic.

\subsection{Small Independent sets}

 In order to show that a family $\mathcal{G}$ of graphs  is not $\chi$-bounded, it is enough to find members $G\in \mathcal{G}$ that are triangle-free of independence number $o(v(G))$. We proved Theorem \ref{thm:main} by constructing such intersection graphs of lines. Suk and Tomon \cite{SukTom21} showed that the family of disjointness graphs of curves, and  Walczak \cite{W15} showed that the family of intersection graphs of segments in the plane contain triangle-free $n$-vertex graphs of independence number $o(n)$. However, it remains open whether a similar statement holds for the intersection graph of boxes in 3 or higher dimensions, see also \cite{W15} for the same question raised.

\begin{conjecture}
For every $\alpha>0$ there exist $n$ and a triangle-free intersection graph of $n$ boxes in $\mathbb{R}^3$ with independence number at most $\alpha n$. 
\end{conjecture}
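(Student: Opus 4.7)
The plan is to attempt the conjecture by adapting the probabilistic framework of Section~\ref{sect:lines} to axis-parallel boxes in $\mathbb{R}^3$, starting from the classical Burling construction. For each $k$, Burling (and later Pawlik--Kozik--Krawczyk--Lasoń--Micek--Trotter--Walczak) produced a family $\mathcal{B}_k$ of axis-parallel boxes in $\mathbb{R}^3$ whose intersection graph $G_k$ is triangle-free with $\chi(G_k) \ge k$. The first step is to make this construction quantitatively amenable to the techniques of this paper: identify a decomposition of the edge set of $G_k$ into complete bipartite subgraphs indexed by the ``shaft'' boxes added at each inductive stage, and deduce a container-style upper bound on the number of maximal independent sets of $G_k$, in the spirit of Lemma~\ref{lemma:independent}.

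With such a bound in hand, I would take a random subfamily $\mathcal{B}_k' \subset \mathcal{B}_k$ by independent $p$-sampling, following the template of Lemma~\ref{lemma:random} and Lemma~\ref{lemma:summary}, and argue that for a suitable $p = p(k)$ the induced subgraph $H_k$ simultaneously satisfies $|V(H_k)| = \Omega(p n_k)$ and $\alpha(H_k) \le p \cdot \alpha(G_k) \cdot (1 + o(1))$ with positive probability. Triangle-freeness is inherited automatically, so no cleaning step is required, and the independence ratio of $H_k$ matches that of $G_k$ up to lower-order factors. The whole question therefore reduces to a quantitative strengthening of Burling's theorem, namely $\alpha(G_k)/n_k \to 0$ as $k \to \infty$.

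The main obstacle is exactly this last reduction. Knowing $\chi(G_k) \ge k$ gives only the lower bound $\alpha(G_k)/n_k \ge 1/k$, which is a bound in the wrong direction, and in principle the ratio could remain bounded below by an absolute positive constant even when $\chi \to \infty$. To close the gap, I would aim for an inductive inequality of the form $\alpha(G_{k+1})/n_{k+1} \le \beta \cdot \alpha(G_k)/n_k$ for some fixed $\beta < 1$, proved by a supersaturation argument analogous to Lemma~\ref{lemma:saturation}: every large independent set of $G_{k+1}$ should be forced either to miss a constant fraction of the level-$(k+1)$ shaft boxes or to miss a constant fraction of each embedded copy of $G_k$, each option costing a multiplicative factor. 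If this structural estimate resists a direct proof, a fallback would be to mimic Theorem~\ref{thm:main} more literally, replacing lines by triples of axis-parallel thin shafts and seeking an arithmetic analogue of the linearly-independent vector set $S$ of Lemma~\ref{lemma:independent_vectors}; the difficulty there is that axis-parallel boxes admit only three axis directions, so the role of $S$ must be played by a different mechanism, perhaps based on varying shaft lengths and positions rather than directions.
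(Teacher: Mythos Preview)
The statement you are attempting is a \emph{conjecture} in the paper, not a theorem; the paper offers no proof, and indeed explicitly lists it as an open problem in the concluding remarks (see also the reference to Walczak~\cite{W15}, where the same question is raised). So there is no ``paper's own proof'' to compare against, and your proposal should be read as a research plan rather than a proof.

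On the substance of the plan: the random-sampling layer is doing no work. In Section~\ref{sect:lines} the sampling is essential because the base graph $G$ has many triangles and the sparsification is what allows one to delete a vertex from each surviving triangle without destroying the graph. Burling's graphs $G_k$ are already triangle-free, so there is nothing to clean, and $p$-sampling an induced subgraph cannot improve the independence ratio: if $I$ is an independent set of $G_k$ of size $\alpha(G_k)$, then $|I\cap V(H_k)|$ concentrates around $p\,\alpha(G_k)$ just as $|V(H_k)|$ concentrates around $p\,n_k$, so $\alpha(H_k)/|V(H_k)|\gtrsim \alpha(G_k)/n_k$. You acknowledge this yourself when you write that ``the independence ratio of $H_k$ matches that of $G_k$''. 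So the entire proposal collapses to the bare assertion $\alpha(G_k)/n_k\to 0$.

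That assertion is precisely the open problem. Your proposed route, an inductive inequality $\alpha(G_{k+1})/n_{k+1}\le\beta\cdot\alpha(G_k)/n_k$ with $\beta<1$, is not supported by any argument, and the heuristic you sketch (``miss a constant fraction of the shafts or of each embedded copy'') does not obviously yield a multiplicative loss: in the Burling recursion the number of copies of $G_k$ inside $G_{k+1}$ grows, and an independent set can afford to be large in most copies while sacrificing a few. There is, to my knowledge, no known bound of the form $\alpha(G_k)=o(n_k)$ for the Burling family, and the analogy with Lemma~\ref{lemma:saturation} is only formal, since that lemma relies on a dense, highly symmetric incidence structure that the Burling construction does not have. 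The fallback idea of mimicking Theorem~\ref{thm:main} with thin axis-parallel shafts runs into the obstacle you already name: with only three axis directions there is no analogue of the set $S$ of pairwise non-parallel directions, and it is exactly the abundance of directions that drives the triangle count and the container bound in Lemmas~\ref{lemma:triangles} and~\ref{lemma:independent}.
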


\subsection{Delaunay graphs and Posets}

Define the partial ordering $\prec$ on $\mathbb{R}^d$ by writing $\textbf{x}\preceq \textbf{y}$ if $\textbf{x}(i)\leq \textbf{y}(i)$ for every $i\in [d]$. Given a finite set of points $P\subset\mathbb{R}^d$, $(P,\preceq)$ is a $d$-dimensional poset. The Hasse diagram of this poset is a subgraph of the Delaunay graph of $P$ with respect to boxes.

Chen, Pach, Szegedy, and Tardos \cite{CPST} proved that there exists a set of $n$ points in the plane whose Delaunay graph with respect to rectangles has independence number $O(\frac{n(\log\log n)^2}{\log n})$. They used a similar argument to show that there are $2$-dimensional posets on $n$ vertices, whose Hasse diagram has roughly the same independence number. Therefore, one might wonder whether Theorem \ref{thm:Delaunay} can be also extended to Hasse diagrams of $d$-dimensional posets.

\begin{conjecture}
 There exists $c>0$ such that for every $d\geq 3$ and every sufficiently large $n$, there exists a $d$-dimensional poset, whose Hasse diagram has independence number at most $\frac{n}{(\log n)^{cd}}$. 
\end{conjecture}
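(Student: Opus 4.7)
The plan is to adapt the proof of Theorem~\ref{thm:Delaunay} to Hasse diagrams --- this would in fact produce the much stronger bound $O_d(n(\log\log n)^{O_d(1)}/(\log n)^{(d-1)/2})$, implying the conjecture (for any $c<1/2$) once $d$ and $n$ are large enough.  Retain the setup of Section~\ref{sect:Del}: parameters $k,s,m,N$ with $k=\Theta(\log n/\log\log n)$ and $N=m^{d-1}|T_k|$, and a random $N$-point set $Q\subset[0,m]^d$.

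The structural input is this: if every block of $\mathcal{B}$ contains at most two points of $P\subseteq Q$, then any comparable pair $x\prec y$ of $P$ lying in a common block $B\in\mathcal{B}$ is automatically an edge of the Hasse diagram of $(P,\preceq)$, because the axis-parallel box spanned by $x$ and $y$ is contained in $B$ and hence contains no third point of $P$.  Define $G''_Q$ to be the graph on $Q$ whose edges are the comparable same-block pairs; then $\alpha(\mathrm{Hasse}(P,\preceq))\leq \alpha(G''_Q[P])$, and it suffices to bound the latter.

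One then mimics Section~\ref{sect:Del} almost verbatim: replace Lemma~\ref{lemma:saturation} by its analogue for $G''_Q$, feed the output into the container Lemma~\ref{lemma:container}, sparsify at rate $p=4n/N$, and remove bad triples.  The only new step is the \emph{saturation lemma for $G''_Q$}: with high probability over $Q$, every $C\subseteq Q$ with $|C|\geq \lambda m^{d-1}$ ($\lambda\geq 2$) has average degree $\Omega_d(\lambda|T_k|)$ in $G''_Q[C]$.  By symmetry of the uniform distribution inside a box and by Bonferroni over the $\mathcal{B}$-blocks, two i.i.d.\ uniform points in $[0,m]^d$ share a block of $\mathcal{B}$ and are comparable with probability $\Theta_d(|T_k|m^{1-d})$, so a fixed $C$ of size $\lambda m^{d-1}$ contains in expectation $\Theta_d(\lambda^2 m^{d-1}|T_k|)$ edges of $G''_Q$.

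The main obstacle is lifting this expectation to a uniform statement over all (exponentially many) subsets $C$.  The edge count $e(G''_Q[C])$ is a degree-$2$ U-statistic in the i.i.d.\ positions whose summands are positively correlated (pairs sharing a vertex), so classical Chernoff/Hoeffding does not apply directly; surviving the union bound over $\exp(\lambda m^{d-1}\log|T_k|)$ subsets requires a concentration inequality that genuinely exploits the near-independence structure $\mathbb{E}[X_{ij}X_{ik}]\approx \mathbb{E}[X_{ij}]\mathbb{E}[X_{ik}]$, most plausibly via a Kim--Vu-type polynomial concentration, or else by carrying out the union bound one block at a time at a slight cost in the threshold $\lambda_0$.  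Once the saturation lemma is in hand, the container construction, random sampling, and triple-removal step transfer verbatim and produce a point set $P$ whose Hasse diagram has independence number $O_d(n(\log\log n)^{O_d(1)}/(\log n)^{(d-1)/2})$, far below the conjectured $n/(\log n)^{cd}$.
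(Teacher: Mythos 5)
This statement is not a theorem of the paper: it is posed in the concluding remarks as an open conjecture, precisely because the author's proof of Theorem~\ref{thm:Delaunay} does not carry over to Hasse diagrams. So there is no proof in the paper to compare against, and your proposal has to stand on its own; as written, it does not. Your structural reduction is fine: if every block of $\mathcal{B}$ contains at most two points of $P$, then a comparable pair $x\prec y$ inside a common block $B$ spans a box contained in $B$ with no third point of $P$, hence is an edge of the Hasse diagram, so it suffices to bound $\alpha(G''[P])$ for the ``comparable same-block'' graph $G''$. But the entire weight of the argument then rests on an analogue of Lemma~\ref{lemma:saturation} for $G''$, and that is exactly the step you do not prove. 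The paper's Lemma~\ref{lemma:saturation} is \emph{deterministic}: it uses only that each block induces a complete graph on $C\cap B$, so Cauchy--Schwarz over blocks gives supersaturation for every $C$. For $G''$ a block induces merely the comparability graph of $C\cap B$, whose independence number is typically of order $|T_k|^{1-1/d}$ (the width of $\approx|T_k|$ random points in a box), so no deterministic supersaturation is possible, and a set $C$ could a priori be chosen, \emph{after seeing} $Q$, to be an antichain inside every block; a back-of-the-envelope random-selection-plus-deletion argument produces $G''$-independent sets of size $\Theta(m^{d-1})$, i.e.\ right at the container threshold, so the lemma you need, if true at all, holds only above some constant $\lambda_0$ and is genuinely delicate.

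Concretely, the missing statement is: with high probability over the point sample, \emph{every} subset of size $\lambda m^{d-1}$ spans $\Omega_d(\lambda^2 m^{d-1}|T_k|)$ comparable same-block pairs. Your expectation computation for a fixed index set of i.i.d.\ points is the easy part; to survive the union bound over $\exp\bigl(\Theta(\lambda m^{d-1}\log|T_k|)\bigr)$ index sets you need a \emph{lower-tail} bound of that strength for a degree-two polynomial of dependent indicators, and none of the tools you name applies off the shelf: Chernoff fails for the reason you state, Janson-type inequalities require a monotone product structure these events lack, and Kim--Vu is an upper-tail tool; ``carrying out the union bound one block at a time'' is not specified in a checkable way, and there is also an unaddressed (if likely minor) conditioning issue, since $Q$ from Lemma~\ref{lemma:pointset} is a cleaned sample rather than i.i.d. (A small side remark: your claimed bound $n(\log\log n)^{O_d(1)}/(\log n)^{(d-1)/2}$ would imply the conjecture only for $c<(d-1)/(2d)$, so e.g.\ $c<1/3$ rather than $c<1/2$; this is immaterial, but symptomatic of the looseness.) Until the supersaturation lemma for $G''$ is formulated precisely and proved, the container machinery has no input, and your proposal is a plausible program that restates the conjecture's actual difficulty rather than a proof of it.
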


 Suk and Tomon \cite{SukTom21} proved that an $n$-vertex Hasse diagram (with no restriction on its dimension) can have independence number $O(n^{3/4})$. We now repeat a problem from \cite{CPST}, which asks whether similar behavior can be achieved for bounded dimensional posets.

\begin{conjecture}
    For every $d\geq 2$ and $\varepsilon>0$, if $n$ is sufficiently large, then every $n$-vertex Hasse diagram of a poset of dimension $d$ has independence number at least $ n^{1-\varepsilon}$.
\end{conjecture}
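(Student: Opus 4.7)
The plan is to split into two cases based on the width $w$ of the poset $P$. Realize $P$ in $\mathbb{R}^d$ with the coordinatewise order, and let $H$ be its Hasse diagram. If $w \geq n^{1-\varepsilon}$, then a maximum antichain is already an independent set in the comparability graph, and hence in $H$, finishing the proof. So the main case is $w < n^{1-\varepsilon}$, which by Mirsky's theorem forces the longest chain in $P$ to have length at least $n^{\varepsilon}$.

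In the small-width case, the goal would be to bound the chromatic number of $H$ by $n^{\varepsilon}$, giving $\alpha(H) \geq n/\chi(H) \geq n^{1-\varepsilon}$. First observe that $H$ is automatically triangle-free: a triangle on $u,v,w$ with (say) $u \prec v \prec w$ would force $uw$ not to be a covering edge, a contradiction. So the task reduces to a polylogarithmic bound on $\chi$ for triangle-free Hasse diagrams of $d$-dimensional posets. The natural attempt is to place $H$ into the semilinear framework of \cite{semilin21}, where triangle-free semilinear graphs of bounded complexity satisfy $\chi \leq (\log n)^{O(1)}$. The edges of $H$ are cut out by a linear inequality on $2d$ coordinates — namely $u \prec v$ — together with a non-local side condition that no other point of $P$ lies in the open box $(u, v)$.

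The main obstacle is precisely this non-local emptiness condition, which places the edge relation outside the scope of \cite{semilin21} as currently stated. The route I would try is to stratify $P$ by a geometric grid: for a resolution parameter $r$, partition the ambient box into $r^d$ cells, and colour each point by the cell containing it. Within a single colour class the ``empty box'' side condition should become essentially local, because only a bounded number of neighbouring cells can interfere with the boxes $(u,v)$; the induced sub-Hasse-diagram would then be genuinely semilinear of bounded complexity, so \cite{semilin21} would give $\chi \leq (\log n)^{O(d)}$ on each class. The hard part — and the crux of the whole plan — is to combine these partial colourings without blowing up $\chi$ by more than a factor $n^{o(1)}$: Hasse edges between points in distant cells still have to be accounted for, and controlling them may require an iterative refinement reminiscent of the container-method argument in Section \ref{sect:Del}, or a genuinely new ingredient.
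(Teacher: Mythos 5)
This statement is not a theorem of the paper: it is an open conjecture, repeated from Chen, Pach, Szegedy, and Tardos \cite{CPST}, and the paper offers no proof of it. So there is nothing to compare your argument against; the only question is whether your sketch closes the problem, and it does not. You yourself flag the final combination step as missing, but the gap appears earlier, in the step you treat as routine. If $u$ and $v$ lie in the same axis-parallel grid cell, the open box spanned by $u$ and $v$ is contained in that cell, so the induced subgraph of the Hasse diagram on a colour class is exactly the Hasse diagram of the sub-poset of points in that cell. Gridding therefore does not make the emptiness condition ``local'' in any useful sense: you are handed back the same kind of object, with the same non-semilinear side condition, and the framework of \cite{semilin21} still does not apply, because the edge relation of a Hasse diagram depends on the whole point set and is not determined by the sign pattern of boundedly many linear functions of the pair $(u,v)$ alone. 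This is precisely why the problem is open, and why the paper's own constructions (and the preceding conjecture, which predicts $d$-dimensional posets with independence number as small as $n/(\log n)^{cd}$) treat Hasse diagrams by bespoke probabilistic and container arguments rather than by semilinear $\chi$-boundedness.

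Two further points. First, your case split on the width $w$ is idle: the large-width case is fine (an antichain is independent in $H$), but in the main case you never use $w < n^{1-\varepsilon}$ (nor the long chain, which incidentally follows from Dilworth's chain cover rather than Mirsky); what you actually attempt there is a polylogarithmic bound on $\chi(H)$ for \emph{all} Hasse diagrams of $d$-dimensional posets, which would make the case analysis superfluous and is itself (a strengthening of) the open conjecture. Second, triangle-freeness cannot carry the weight you place on it: every Hasse diagram, of any poset, is triangle-free, yet Suk and Tomon \cite{SukTom21} construct $n$-vertex Hasse diagrams with independence number $O(n^{3/4})$. So any successful argument must use the dimension-$d$ hypothesis in an essential, quantitative way, which your plan does only through the failed semilinear reduction.
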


\vspace{0.3cm}
\noindent	
{\bf Acknowledgements.} We would like the thank the anonymous referees for their useful insights and suggestions. Also, we would like to thank J\'anos Pach and G\'abor Tardos for the idea of Lemma \ref{lemma:independent} and for pointing out some errors, and James Davies and Zach Hunter for valuable comments.

\end{document}